\begin{document}

\newtheorem{theorem}{Theorem}[section]
\newtheorem{main}{Main Theorem}
\newtheorem{proposition}[theorem]{Proposition}
\newtheorem{corollary}[theorem]{Corollary}
\newtheorem{definition}[theorem]{Definition}
\newtheorem{lemma}[theorem]{Lemma}
\newtheorem{example}[theorem]{Example}
\newtheorem{remark}[theorem]{Remark}
\newtheorem{question}[theorem]{Question}
\newtheorem{conjecture}[theorem]{Conjecture}
\newtheorem{fact}[theorem]{Fact}
\newtheorem*{ac}{Acknowledgements}

\title{Uncertainty Principles for Kac Algebras} % title
\author{
Zhengwei Liu\\
{\small Department of Mathematics and Department of Physics}\\
{\small Harvard University}\\
{\small zhengwei.liu@fas.harvard.edu}\\
\and
Jinsong Wu\\
{\small School of Mathematical Sciences}\\
{\small University of Science and Technology of China}\\
{\small wjsl@ustc.edu.cn}}
\maketitle

\begin{abstract}
In this paper, we introduce the notation of bi-shift of biprojections in subfactor theory to unimodular Kac algebras.
We characterize the minimizers of Hirschman-Beckner uncertainty principle and Donoho-Stark uncertainty principle for unimodular Kac algebras with biprojections and prove Hardy's uncertainty principle in terms of minimizers.
\end{abstract}

\section{Introduction}
Uncertainty principles for locally compact abelian groups were studied by Hardy \cite{Hardy}, Hirschman \cite{Hirsch}, Beckner \cite{Beck}, Donoho and Stark \cite{DoSt}, Smith \cite{Smith}, Tao \cite{Tao05} etc.
In 2008, Alagic and Russell \cite{AlRu} proved Donoho-Stark uncertainty principle for compact groups.
In 2004, \"{O}zaydm and Przebinda \cite{OzPr} characterized the minimizers of Hirschman-Beckner uncertainty principle and Donoho-Stark uncertainty principle for locally compact abelian groups.

Kac algebras were introduced independently by L.I Vainerman and G.I. Kac \cite{Vain74,VainKac, VainKac74} and by Enock and Nest \cite{EnSch73,EnSch74,EnSch75}, which generalized locally compact groups and their duals. Furthermore, J. Kustermans and S. Vaes introduced locally compact quantum groups \cite{KuVaes}.
Recently Crann and Kalantar proved Hirschman-Beckner uncertainty principle and Donoho-Stark uncertainty principle for unimodular locally compact quantum groups \cite{CrKa}.

Subfactor theory also provides a natural framework to study quantum symmetry.
The group symmetry is captured by the subfactor arisen from the group crossed product construction.
Ocneanu first pointed out the one-to-one correspondence between finite dimensional Kac algebras and finite-index, depth-two, irreducible subfactors. This correspondence was proved by W. Szymanski \cite{Sz94}.
Enock and Nest generalized the correspondence to infinite dimensional compact (or discrete) type Kac algebras and infinite-index, depth-two, irreducible subfactors \cite{EnNe}.
In general, a subfactor provides a pair of non-commutative spaces dual to each other and a Fourier transform $\mathcal{F}$ between them.
It appears to be natural to study Fourier analysis for subfactors.

In \cite{JLW}, C. Jiang and the authors study uncertainty principles for finite index subfactors in terms of planar algebras.
We proved Hirschman-Beckner uncertainty principle and Donoho-Stark uncertainty principle for finite index subfactors.
Furthermore, we introduced bi-shifts of biprojections \footnote{Bisch and Jones introduced \emph{biprojections} \cite{Bis94,BJ97a} which generalize the indicator function of subgroups. Bi-shifts of biprojections generalize the notion of modulation and translation of the indicator function of subgroups, although modulation and translation do not make sense in subfactor theory.}
, and use them to characterize the minimizers of the two uncertainty principles.

Moreover, we formalized Hardy's uncertainty principle using the minimizers of the Hirschman-Beckner uncertainty principle,
and proved it for finite index subfactors.
The case for finite-index, depth-two, irreducible subfactors covers the results for finite dimensional Kac algebras.
The quantum group community wondered whether the methods in \cite{JLW} work for infinite-dimensional cases.
That is the motivation of this paper.

In this paper, we introduce notions in subfactor theory to unimodular Kac algebras, such as biprojections, bi-shifts of biprojections.
For example, the identity of a compact type locally compact quantum group is a biprojection. The Fourier transform transform of a biprojection is a biprojection.
We characterize the minimizers the Hirschman-Beckner uncertainty principle and the Donoho-Stark uncertainty principle for unimodular Kac algebras containing biprojections. Furthermore, we prove the Hardy uncertainty principle for such Kac algebras.
Our proofs utilize the ideas in subfactor theory \cite{JLW} and the methods for locally compact quantum groups \cite{KuVaes}.

\begin{main}[Proposition \ref{min1}, Theorem \ref{bich}]\label{Thm:main1}
Let $\mathbb{G}$ be a unimodular Kac algebra. For any nonzero $w$ in $L^1(\mathbb{G})\cap L^2(\mathbb{G})$, the following statements are equivalent:
\begin{itemize}
\item[(1)] $H(|w|^2)+H(|\mathcal{F}(w)|^2)=-4\|w\|_2^2\log\|w\|_2$;
\item[(2)] $\mathcal{S}(w)\mathcal{S}(\mathcal{F}(w))=1$;
\item[(3)] $w$ is an extremal bi-partial isometry.
\item[(4)] $w$ is a bi-shift of a biprojection.
\end{itemize}
\end{main}

Conditions (1) and (2) are inequalities in general, namely Hirschman-Beckner uncertainty principle and Donoho-Stark uncertainty principle.
When $\mathbb{G}$ has biprojections, the above four conditions characterize the minimizers of the Hirschman-Beckner uncertainty principle.
In terms of these minimizers, we obtain Hardy's uncertainty principle for unimodular Kac algebras.
\begin{main}[Hardy's uncertainty principle, Theorem \ref{hardy}]\label{Thm:main2}
Let $\mathbb{G}$ be a unimodular Kac algebra.
Suppose that a non-zero $w$ in $L^1(\mathbb{G})\cap L^\infty(\mathbb{G})$ satisfies the conditions in Theorem \ref{Thm:main1}.
For any $x\in L^1(\mathbb{G})\cap L^\infty(\mathbb{G})$,
if $|x|\leq C|w|$ and $|\mathcal{F}(x)|\leq C'|\mathcal{F}(w)|,$ for some constants $C>0$ and $C'>0$,
then $x$ is a scalar multiple of $w$.
\end{main}

\begin{ac}
Parts of the work was done during visits of authors to Hebei Normal University. The authors would like to thank Quanhua Xu for helpful discussions. Zhengwei Liu was supported by a grant from Templeton Religion Trust. Jinsong Wu was supported by NSFC (Grant no. A010602).
\end{ac}

\section{Preliminaries}

Let $\mathcal{M}$ be a von Neumann algebra acting on a Hilbert space $\mathcal{H}$ with a normal semifinite faithful tracial weight $\varphi$.

A closed densely defined operator $x$ affiliated with $\mathcal{M}$ is called $\varphi$-measurable if for all $\epsilon>0$ there exists a projection $p\in\mathcal{M}$ such that $p\mathcal{H}\subset \mathcal{D}(x)$, and $\varphi(1-p)\leq \epsilon$, where $\mathcal{D}(x)$ is the domain of $x$.
Denote by $\widetilde{\mathcal{M}}$ the set of $\varphi$-measurable closed densely defined operators. Then $\widetilde{\mathcal{M}}$ is $*$-algebra with respect to strong sum, strong product, and adjoint operation.
If $x$ is a positive self-adjoint $\varphi$-measurable operator, then $x^\alpha\log x$ is $\varphi$-measurable for any $\alpha\in\mathbb{C}$ with $\Re\alpha>0$, where $\Re \alpha$ is the real part of $\alpha$.

The sets
$$N(\varepsilon,\varepsilon')=\{x\in\widetilde{\mathcal{M}}|\exists \text{ a projection }p\in\mathcal{M}:p\mathcal{H}\subseteq \mathcal{D}(x),\|xp\|\leq \varepsilon,\varphi(1-p)\leq \varepsilon'\},$$
where $\epsilon,\epsilon'>0$, form a basis for the neighborhoods of $0$ for a topology on $\widetilde{\mathcal{M}}$ that turns $\widetilde{\mathcal{M}}$ into a topological vector space.
Now $\widetilde{\mathcal{M}}$ is a complete Hausdorff topological *-algebra and $\mathcal{M}$ is a dense subset of $\widetilde{\mathcal{M}}$.

For any positive self-adjoint operator $x$ affiliated with $\mathcal{M}$, we put
$$\varphi(x)=\sup_{n\in\mathbb{N}}\varphi(\int_0^n tde_t),$$
where $x=\int_0^\infty tde_t$ is the spectral decomposition of $x$. Then for $p\in [1,\infty)$, the noncommutative $L^p$ space $L^p(\mathcal{M})$ with respect to $\varphi$ is given by
$$L^p(\mathcal{M})=\{x\text{ densely defined, closed, affiliated with }\mathcal{M}|\varphi(|x|^p)<\infty\}.$$
The $p$-norm $\|x\|_p$ of $x$ in $L^p(\mathcal{M})$ is given by $\|x\|_p=\varphi(|x|^p)^{1/p}$.
We have that $L^p(\mathcal{M})\subseteq \widetilde{\mathcal{M}}$.
For more details on noncommutative L$^p$ space we refer to \cite{TerpNC,Terp82}.

Throughout the paper, we will use the results in \cite{KuVaes} frequently.
Let us recall the definition of locally compact quantum groups.

Let $\mathcal{M}$ be a von Neumann algebra with a normal semifinite faithful weight $\varphi$.
Then $\mathfrak{N}_\varphi=\{x\in \mathcal{M}|\varphi(x^*x)<\infty\}$,
$\mathfrak{M}_\varphi=\mathfrak{N}_\varphi^*\mathfrak{N}_\varphi$,
$\mathfrak{M}_\varphi^+=\{x\geq 0|x\in\mathfrak{M}_\varphi\}$.
Denote by $\mathcal{H}_\varphi$ the Hilbert space by taking the closure of $\mathfrak{N}_\varphi$.
The map $\Lambda_\varphi:\mathfrak{N}_\varphi\mapsto \mathcal{H}_\varphi$ is the inclusion map.
We may use $\Lambda$ instead of $\Lambda_\varphi$ if there is no confusion.

A locally compact quantum group $\mathbb{G}=(\mathcal{M},\Delta,\varphi,\psi)$ consists of
\begin{itemize}
\item[(1)] a von Neumann algebra $\mathcal{M}$,
\item[(2)] a normal, unital, *-homomorphism $\Delta: \mathcal{M}\to\mathcal{M}\overline{\otimes}\mathcal{M}$ such that
$(\Delta\otimes\iota)\circ\Delta=(\iota\otimes\Delta)\circ\Delta$,
\item[(3)] a normal, semi-finite, faithful weight $\varphi$ such that
$(\iota\otimes\varphi)\Delta(x)=\varphi(x) 1$, $\forall x\in \mathfrak{M}_\varphi^+$;\\
a normal, semi-finite, faithful weight $\psi$ such that
$(\psi\otimes \iota)\Delta(x)=\psi(x)1$, $\forall x\in \mathfrak{M}_\psi^+$,
\end{itemize}
where $\overline{\otimes}$ denotes the von Neumann algebra tensor product, $\iota$ denotes the identity map.
The normal, unital, *-homomorphism $\Delta$ is a comultiplication of $\mathcal{M}$, $\varphi$ is the left Haar weight, and $\psi$ is the right Haar weight.

We assume that $\mathcal{M}$ acts on $\mathcal{H}_\varphi$.
There exists a unique unitary operator $W\in \mathcal{B}(\mathcal{H}_\varphi\otimes\mathcal{H}_\varphi)$ which is known as the multiplicative unitary defined by
$$W^*(\Lambda_\varphi(a)\otimes\Lambda_\varphi(b))=(\Lambda_\varphi\otimes\Lambda_\varphi)(\Delta(b)(a\otimes 1)), \quad a,b\in\mathfrak{N}_\varphi.$$
Moreover for any $x\in\mathcal{M}$, $\Delta(x)=W^*(1\otimes x)W.$

For the locally compact quantum group $\mathbb{G}$, there exist an antipode $S$, a scaling automorphism group $\tau$ and a unitary antipode $R$ and there also exists a dual locally compact quantum group $\hat{\mathbb{G}}=(\hat{\mathcal{M}},\hat{\Delta},\hat{\varphi},\hat{\psi})$ of $\mathbb{G}$.
The antipode, the scaling group, and the unitary antipode of $\hat{\mathbb{G}}$ will denoted by $\hat{S}$, $\hat{\tau}$, and $\hat{R}$ respectively. We refer \cite{KuVaes,KuVaes03} for more details.

For any $\omega\in\mathcal{M}_*$, $\lambda(\omega)=(\omega\otimes\iota)(W)$ is the Fourier representation of $\omega$, where $\mathcal{M}_*$ is the Banach space of all bounded normal functional on $\mathcal{M}$.
For any $\omega$, $\theta$ in $\mathcal{M}_*$, the convolution $\omega*\theta$ is given by
$$\omega*\theta=(\omega\otimes\theta)\Delta.$$
In \cite{LWW}, S. Wang and the authors defined the convolution $x*y$ of $x\in L^p(\mathbb{G})$ and $L^q(\mathbb{G})$ for $1\leq p,q\leq 2$. If the left Haar weights $\varphi$, $\hat{\varphi}$ of $\mathbb{G}$ and $\hat{\mathbb{G}}$ respectively are tracial weights, we have that the convolution is well-defined for $1\leq p,q\leq \infty$ by the results in \cite{LWW}.

For any locally compact quantum group $\mathbb{G}$, the Fourier transforms $\mathcal{F}_p: L^p(\mathbb{G})\to L^q(\hat{\mathbb{G}})$ is well-defined. (See \cite{Cooney},\cite{Cas} for the definition of Fourier transforms and \cite{Daele07} for the definition of the Fourier transform for algebraic quantum groups.)
For any $x$ in $L^1(\mathbb{G})$, we deonte by $x\varphi$ the bounded linear functional on $L^\infty(\mathbb{G})$ given by $(x\varphi) (y)=\varphi(yx)$ for any $y$ in $L^\infty(\mathbb{G})$.
Recall that a projection $p$ in $L^1(\mathbb{G})\cap L^\infty(\mathbb{G})$ is a biprojection if $\mathcal{F}_1(p\varphi)$ is a multiple of a projection in $L^\infty(\hat{\mathbb{G}})$, (see \cite{LWW} for more properties of biprojections).

\section{Main Results}
In this section, we will focus on a unimodular Kac algebra $\mathbb{G}$, which is a locally compact quantum group subject to the condition $\varphi=\psi$ is tracial. (See \cite{EnSchwart} for more details.)
We denote $L^\infty(\mathbb{G})$ by $\mathcal{M}$.
The Fourier transform $\mathcal{F}_p$ from $L^p(\mathbb{G})$ to $L^q(\hat{\mathbb{G}})$ is given by $x\mapsto \lambda(x\varphi)$ for any $x\in L^1(\mathbb{G})\cap L^\infty(\mathbb{G})$.
For a unimodular Kac algebra $\mathbb{G}$, we will denote by $\mathcal{F}$ the Fourier transform for simplicity.

For any $\varphi$-measurable element $x$ in $\widetilde{\mathcal{M}}$, the von Neumann entropy $H(|x|^2)$ is defined by
$$H(|x|^2)=-\varphi(x^*x\log x^*x).$$

\begin{proposition}\label{Hirsch}
Let $\mathbb{G}$ be a unimodular Kac algebra. Then for any $x\in L^1(\mathbb{G})\cap L^2(\mathbb{G})$, we have
$$H(|x|^2)+H(|\mathcal{F}(x)|^2)\geq -4\|x\|_2^2\log \|x\|_2.$$
\end{proposition}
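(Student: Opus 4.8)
The plan is to deduce the entropic inequality from the sharp Hausdorff--Young inequality by differentiating the ``defect'' at the self-dual exponent $p=2$, which is the Hirschman argument adapted to the tracial noncommutative setting. First I would establish Hausdorff--Young: for $1\le p\le 2$ with conjugate exponent $q=p/(p-1)$, one has $\|\mathcal{F}(x)\|_q\le \|x\|_p$. Since $\mathbb{G}$ is unimodular with tracial Haar weight, the Fourier transform $\mathcal{F}(x)=\lambda(x\varphi)$ satisfies the two endpoint bounds $\|\mathcal{F}(x)\|_\infty\le \|x\|_1$ and the Plancherel identity $\|\mathcal{F}(x)\|_2=\|x\|_2$; noncommutative Riesz--Thorin/Stein interpolation between these yields the intermediate estimates.

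Next I would introduce on the interval $(1,2]$ the defect function
$$F(p)=\frac{1}{q}\log\hat{\varphi}(|\mathcal{F}(x)|^q)-\frac{1}{p}\log\varphi(|x|^p),\qquad q=\frac{p}{p-1}.$$
Because $x\in L^1(\mathbb{G})\cap L^2(\mathbb{G})$ interpolates into every $L^p(\mathbb{G})$ with $1\le p\le 2$, each term is finite on this range. Hausdorff--Young gives $F(p)\le 0$, while Plancherel gives $F(2)=0$. Thus $F$ attains its maximum over $(1,2]$ at the right endpoint, and the one-sided derivative must satisfy $F'(2^-)\ge 0$.

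It then remains to compute $F'(2)$. Differentiating $p\mapsto \tfrac1p\log\varphi(|x|^p)$ and using $\tfrac{dq}{dp}=-1$ at $p=2$, the logarithmic derivatives produce exactly the entropy terms via $\varphi(|x|^2\log|x|^2)=-H(|x|^2)$ and $\hat{\varphi}(|\mathcal{F}(x)|^2\log|\mathcal{F}(x)|^2)=-H(|\mathcal{F}(x)|^2)$. Writing $N=\|x\|_2^2=\|\mathcal{F}(x)\|_2^2$, a direct calculation gives
$$F'(2)=\frac{1}{2}\log N+\frac{1}{4N}\bigl(H(|x|^2)+H(|\mathcal{F}(x)|^2)\bigr),$$
so that $F'(2^-)\ge 0$ rearranges to $H(|x|^2)+H(|\mathcal{F}(x)|^2)\ge -2N\log N=-4\|x\|_2^2\log\|x\|_2$, which is the assertion.

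The main obstacle is the differentiation step: justifying that $p\mapsto\varphi(|x|^p)$ is differentiable with $\tfrac{d}{dp}\varphi(|x|^p)=\varphi(|x|^p\log|x|)$ requires differentiating under the weight, controlling the integrand uniformly as $p\to 2^-$, and knowing $|x|^p\log|x|$ is $\varphi$-measurable and integrable. This is supplied by the measurability of $x^\alpha\log x$ for $\Re\alpha>0$ recorded in the Preliminaries together with the $L^1\cap L^2$ hypothesis; one should also confirm that only the one-sided derivative at $p=2$ is needed and that the interpolation underlying Hausdorff--Young applies to the analytic family defining $\mathcal{F}$.
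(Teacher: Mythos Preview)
Your proposal is correct and follows essentially the same approach as the paper: both differentiate the Hausdorff--Young inequality at $p=2$ (the classical Hirschman argument) to obtain the entropic inequality. The paper's proof is terser---it simply cites Cooney for Hausdorff--Young and Terp's Lemma~18 for the differentiability of $\alpha\mapsto|x|^\alpha$, then states that differentiating at $p=2$ gives the result---whereas you spell out the defect function and the computation of its one-sided derivative explicitly, but the underlying idea is identical.
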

\begin{proof}
By Lemma 18 in \cite{TerpNC}, we have that $\alpha\mapsto |x|^\alpha$ is differentiable for $\alpha>0$.
Now differentiating the Hausdorff-Young inequality \cite{Cooney}
$$\|\mathcal{F}(x)\|_q\leq \|x\|_p,\quad x\in L^1(\mathbb{G})\cap L^2(\mathbb{G}), \quad p\in[1,2],\quad\frac{1}{p}+\frac{1}{q}=1,$$
with respect to $p$ and plug $p=2$ into the result inequality, we can obtain that
$$H(|x|^2)+H(|\mathcal{F}(x)|^2)\geq -4\|x\|_2^2\log \|x\|_2.$$
\end{proof}

For any $x\in\widetilde{\mathcal{M}}$, let $\mathcal{S}(x)=\varphi(\mathcal{R}(x))$, where $\mathcal{R}(x)$ is the range projection of $x$.

\begin{proposition}\label{DS}
Let $\mathbb{G}$ be a unimodular Kac algebra.
Then for any nonzero $x\in L^1(\mathbb{G})\cap L^2(\mathbb{G})$, we have
$$\mathcal{S}(x)\mathcal{S}(\mathcal{F}(x))\geq 1.$$
\end{proposition}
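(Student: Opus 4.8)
The plan is to mimic the classical Donoho--Stark argument, replacing support sizes by traces of range projections and controlling the relevant norms through the endpoint Hausdorff--Young inequality together with the Plancherel identity. Throughout I may assume that both $\mathcal{S}(x)$ and $\mathcal{S}(\mathcal{F}(x))$ are finite, since otherwise the inequality holds trivially; and since $x\neq 0$ lies in $L^2(\mathbb{G})$ we have $\|x\|_2>0$.

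First I would record the endpoint Hausdorff--Young bound $\|\mathcal{F}(x)\|_\infty\leq \|x\|_1$, obtained by setting $p=1$, $q=\infty$ in the inequality cited in the proof of Proposition~\ref{Hirsch}. Next I would bound $\|x\|_1$ by $\|x\|_2$ at the cost of the support size. Writing $s(x)$ for the support projection of $x$ (the range projection of $|x|$), one has $|x|=s(x)|x|$, so the tracial Cauchy--Schwarz inequality gives
$$\|x\|_1=\varphi(s(x)|x|)\leq \varphi(s(x))^{1/2}\,\varphi(|x|^2)^{1/2}=\varphi(s(x))^{1/2}\|x\|_2.$$
The place where unimodularity is essential is the identification $\varphi(s(x))=\varphi(\mathcal{R}(x))=\mathcal{S}(x)$: the support projection and the range projection of $x$ are in general distinct in the noncommutative setting, but they are Murray--von Neumann equivalent via the partial isometry in the polar decomposition of $x$, and the tracial weight $\varphi$ is invariant under this equivalence. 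Hence $\|x\|_1\leq \mathcal{S}(x)^{1/2}\|x\|_2$, and combined with Hausdorff--Young, $\|\mathcal{F}(x)\|_\infty\leq \mathcal{S}(x)^{1/2}\|x\|_2$.

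Second, I would bound the $L^2$-mass of $\mathcal{F}(x)$ by its support size times its $L^\infty$-norm. Writing $y=\mathcal{F}(x)$ and $s(y)$ for its support projection, the operator inequality $|y|^2\leq \|y\|_\infty^2\, s(y)$ and positivity of $\hat{\varphi}$ give
$$\|\mathcal{F}(x)\|_2^2=\hat{\varphi}(|y|^2)\leq \|y\|_\infty^2\,\hat{\varphi}(s(y))=\|\mathcal{F}(x)\|_\infty^2\,\mathcal{S}(\mathcal{F}(x)),$$
again using $\hat{\varphi}(s(y))=\hat{\varphi}(\mathcal{R}(y))=\mathcal{S}(\mathcal{F}(x))$ by the same equivalence argument applied on the dual Kac algebra. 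Chaining the two displays yields $\|\mathcal{F}(x)\|_2^2\leq \mathcal{S}(x)\,\mathcal{S}(\mathcal{F}(x))\,\|x\|_2^2$.

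Finally I would invoke the Plancherel identity $\|\mathcal{F}(x)\|_2=\|x\|_2$, valid because the Fourier transform of a unimodular Kac algebra restricts to an isometry from $L^2(\mathbb{G})$ onto $L^2(\hat{\mathbb{G}})$. Dividing the resulting inequality $\|x\|_2^2\leq \mathcal{S}(x)\,\mathcal{S}(\mathcal{F}(x))\,\|x\|_2^2$ by $\|x\|_2^2>0$ gives $\mathcal{S}(x)\mathcal{S}(\mathcal{F}(x))\geq 1$. The main subtlety, and the only point where noncommutativity genuinely intervenes, is the passage between the support projection --- which the Cauchy--Schwarz and operator-norm estimates naturally produce --- and the range projection appearing in the definition of $\mathcal{S}$; traciality of $\varphi$ is exactly what makes the two interchangeable.
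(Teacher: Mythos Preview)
Your argument is correct and coincides with the paper's second proof: Hausdorff--Young at the endpoint, H\"older against the support, Plancherel, and H\"older again on the dual side. One small remark: the ``main subtlety'' you flag is not actually needed, since $\mathcal{R}(x)x=x$ lets you apply H\"older directly with the range projection (as the paper does), avoiding the detour through the support projection and Murray--von Neumann equivalence.
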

\begin{proof}
We present two proofs here.

1.  By using the inequality $\log \mathcal{S}(x)\geq H(|x|^2)$ when $\|x\|_2=1$ and Proposition \ref{Hirsch},
we see the proposition is true.

2. We assume that $\mathcal{S}(x),\mathcal{S}(\mathcal{F}(x))<\infty$. Then by H\"{o}lder's inequality, we have
\begin{eqnarray*}
\|\mathcal{F}(x)\|_\infty&\leq &\|x\|_1\leq \|\mathcal{R}(x)\|_2\|x\|_2\\
&=&\mathcal{S}(x)^{1/2}\|\mathcal{F}(x)\|_2\\
&\leq&\mathcal{S}(x)^{1/2}\mathcal{S}(\mathcal{F}(x))^{1/2}\|\mathcal{F}(x)\|_\infty.
\end{eqnarray*}
Therefore $\mathcal{S}(x)\mathcal{S}(\mathcal{F}(x))\geq 1$.
\end{proof}

\begin{definition}
An element $x$ in $L^1(\mathbb{G})\cap L^2(\mathbb{G})$ is said to be extremal if $\|\mathcal{F}(x)\|_\infty=\|x\|_1$. We say a nonzero element $x$ is an (extremal) bi-partial isometry if $x$ and $\mathcal{F}(x)$ are multiplies of (extremal) partial isometries.
\end{definition}

\begin{proposition}
Let $\mathbb{G}$ be a unimodular Kac algebra. If $x$ is extremal, then $x^*$ and $R(x)$ are extremal.
\end{proposition}
\begin{proof}
By Proposition 2.4 in \cite{KuVaes03}, we have
\begin{eqnarray*}
\|\mathcal{F}(x^*)\|_\infty&=&\|\lambda(x^*\varphi)\|_\infty=\|\lambda(x^*\varphi)^*\|_\infty\\
&=&\|\lambda(\overline{x^*\varphi}R)\|_\infty=\|\lambda(x\varphi R)\|_\infty\\
&=&\|\hat{R}(\lambda(x\varphi))\|_\infty=\|\lambda(x\varphi)\|_\infty,\\
\|\mathcal{F}(R(x))\|_\infty&=&\|\lambda(R(x)\varphi)\|_\infty=\|\lambda(x\varphi R)\|_\infty\\
&=&\|\hat{R}(\lambda(x\varphi))\|_\infty=\|\lambda(x\varphi)\|_\infty,
\end{eqnarray*}
and
$$\varphi(|x|)=\varphi(|x^*|)=\varphi(R(|x|))=\varphi(|R(x)|)$$
Therefore $x^*$ and $R(x)$ are extremal.
\end{proof}

\begin{proposition}\label{min1}
Let $\mathbb{G}$ be a unimodular Kac algebra. For any nonzero $x$ in $L^1(\mathbb{G})\cap L^2(\mathbb{G})$, the following statements are equivalent:
\begin{itemize}
\item[(1)] $H(|x|^2)+H(|\mathcal{F}(x)|^2)=-4\|x\|_2^2\log\|x\|_2$;
\item[(2)] $\mathcal{S}(x)\mathcal{S}(\mathcal{F}(x))=1$;
\item[(3)] $x$ is an extremal bi-partial isometry.
\end{itemize}
\end{proposition}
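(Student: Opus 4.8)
The plan is to establish the equivalences through $(2)\Leftrightarrow(3)$ together with the easy implication $(2)\Rightarrow(1)$ and the harder converse $(1)\Rightarrow(2)$, after the harmless normalization $\|x\|_2=1$ (every condition is invariant under $x\mapsto cx$, and the right-hand side of (1) then becomes $0$). Two ingredients will be used repeatedly: the Plancherel identity $\|\mathcal{F}(x)\|_2=\|x\|_2$ for a unimodular Kac algebra, and the entropy--support bound $H(|x|^2)\le \log\mathcal{S}(x)$ for $\|x\|_2=1$, whose equality case I would isolate as a lemma. Writing $|x|^2$ as a density of trace $1$ supported on $\mathcal{R}(x^*x)$ (which has trace $\mathcal{S}(x)$ by traciality), concavity of $\log$ gives the bound, with equality exactly when $|x|^2=\mathcal{S}(x)^{-1}\mathcal{R}(x)$, that is, when $x$ is a multiple of a partial isometry.

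For $(2)\Leftrightarrow(3)$ I would reuse the H\"older chain from the second proof of Proposition \ref{DS},
$$\|\mathcal{F}(x)\|_\infty\le\|x\|_1\le \mathcal{S}(x)^{1/2}\|\mathcal{F}(x)\|_2\le \mathcal{S}(x)^{1/2}\mathcal{S}(\mathcal{F}(x))^{1/2}\|\mathcal{F}(x)\|_\infty,$$
and analyze its equality case. Condition (2) forces equality at every step: the first step is precisely extremality $\|\mathcal{F}(x)\|_\infty=\|x\|_1$; equality in $\varphi(|x|)\le\|\mathcal{R}(x)\|_2\|x\|_2$ forces $|x|\propto\mathcal{R}(x)$, so $x$ is a partial-isometry multiple; and equality in the final H\"older step forces $|\mathcal{F}(x)|^2\propto\mathcal{R}(\mathcal{F}(x))$, so $\mathcal{F}(x)$ is a partial-isometry multiple. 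Thus (2) produces an extremal bi-partial isometry, and conversely each of these three equalities is exactly what is needed to collapse the chain, giving $(3)\Rightarrow(2)$. The implication $(2)\Rightarrow(1)$ is then immediate from the squeeze $\log(\mathcal{S}(x)\mathcal{S}(\mathcal{F}(x)))\ge H(|x|^2)+H(|\mathcal{F}(x)|^2)\ge 0$, obtained by adding the two entropy--support bounds and invoking Proposition \ref{Hirsch}: if the outer term is $0$, the middle term is forced to $0$.

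The crux is $(1)\Rightarrow(2)$, i.e. the equality case of the Hirschman--Beckner inequality. I would study $f(p)=\log\|x\|_p-\log\|\mathcal{F}(x)\|_{p'}$ with $p'=p/(p-1)$. Hausdorff--Young gives $f\ge 0$ on $[1,2]$; applying Hausdorff--Young to $\mathcal{F}(x)$ and using that $\mathcal{F}^2$ acts as a norm-preserving reflection in the unimodular case gives $f\le 0$ on $[2,\infty)$; and $f(2)=0$ by Plancherel. Differentiating at $p=2$ reproduces Proposition \ref{Hirsch} in the sharp form $f'(2)=-\tfrac14\big(H(|x|^2)+H(|\mathcal{F}(x)|^2)\big)$, so (1) is exactly $f'(2)=0$. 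Since $f$ changes sign at $p=2$, the second-order Taylor coefficient must vanish as well: $f''(2)\ge 0$ from the left and $f''(2)\le 0$ from the right force $f''(2)=0$. A direct computation identifies $f''(2)$, after imposing $f'(2)=0$, with a nonnegative combination of the variances $\varphi(|x|^2(\log|x|)^2)-\varphi(|x|^2\log|x|)^2$ and its analogue for $\mathcal{F}(x)$, i.e. the variances of $\log|x|$ and $\log|\mathcal{F}(x)|$ in the states $|x|^2\varphi$ and $|\mathcal{F}(x)|^2\varphi$. Their vanishing means $\log|x|$ and $\log|\mathcal{F}(x)|$ are constant on their supports, so $x$ and $\mathcal{F}(x)$ are multiples of partial isometries; the entropy--support lemma then upgrades this to $H(|x|^2)=\log\mathcal{S}(x)$ and $H(|\mathcal{F}(x)|^2)=\log\mathcal{S}(\mathcal{F}(x))$, and combining with (1) yields $\mathcal{S}(x)\mathcal{S}(\mathcal{F}(x))=1$, which is (2). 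Finally $(2)\Rightarrow(3)$ supplies extremality.

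The main obstacle is the second-order analysis in the noncommutative $L^p$ setting. I expect the real work to lie in justifying that $p\mapsto\|x\|_p$ and $p\mapsto\|\mathcal{F}(x)\|_{p'}$ are twice differentiable near $p=2$ (via the functional-calculus estimates of \cite{TerpNC} and the measurability of $|x|^\alpha\log|x|$), in making sense of $f$ on a two-sided neighborhood of $2$ with the correct integrability so that the sign-change argument applies, and above all in carrying out the computation that reduces $f''(2)$ to a sum of genuine nonnegative variance terms once the first-order constraint is used. By contrast, the entropy--support equality case and the equality cases of the H\"older steps are comparatively routine; the delicate passage is from the vanishing first derivative to the vanishing second derivative and the clean emergence of the variances.
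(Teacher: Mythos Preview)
Your treatment of $(2)\Leftrightarrow(3)$ and $(2)\Rightarrow(1)$ is correct and matches the paper. The gap is in $(1)\Rightarrow(2)$, where your second–derivative argument does not go through.

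There are two problems. First, the domain: with only $x\in L^1\cap L^2$ you have $\mathcal{F}(x)\in L^2\cap L^\infty$, so for $p>2$ neither $\|x\|_p$ nor $\|\mathcal{F}(x)\|_{p'}$ is guaranteed finite, and the Hausdorff--Young inequality applied to $\mathcal{F}(x)$ needs $\mathcal{F}(x)\in L^{p'}$ with $p'<2$. Thus the sign information ``$f\le 0$ on $[2,\infty)$'' is not available, and one only gets $f''(2)\ge 0$ from the left. Second, and more seriously, even granting two–sided information, the identification of $f''(2)$ is not what you claim. Writing $a=\varphi(|x|^2\log|x|)$, $b=\hat\varphi(|\mathcal{F}(x)|^2\log|\mathcal{F}(x)|)$, and $V_x,V_y$ for the two variances, a direct computation (with $\|x\|_2=1$, using the chain rule for $p'=p/(p-1)$ whose first and second derivatives at $p=2$ are $-1$ and $2$) gives
\[
f''(2)=\Big(-\tfrac12 a+\tfrac12 V_x\Big)-\Big(-\tfrac12 b+\tfrac12 V_y\Big)-2\cdot\tfrac12 b
=-\tfrac12(a+b)+\tfrac12\big(V_x-V_y\big).
\]
Under hypothesis (1) one has $a+b=0$, so $f''(2)=\tfrac12(V_x-V_y)$, a \emph{difference} of variances. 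Hence $f''(2)=0$ would only force $V_x=V_y$, not $V_x=V_y=0$; the conclusion that $x$ and $\mathcal{F}(x)$ are multiples of partial isometries does not follow.

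The paper avoids this by a complex–analytic argument rather than a real second–order one: it defines the holomorphic function
\[
F(z)=\hat\varphi\big(\mathcal{F}(w_x|x|^{2z})\,|\mathcal{F}(x)|^{2z}w_{\mathcal{F}(x)}^*\big),\qquad \tfrac12<\Re z<1,
\]
shows $|F|\le 1$ on the strip via H\"older and Hausdorff--Young, and uses the rigidity of bounded holomorphic functions (as in \cite{JLW}, Theorem~6.4) together with condition (1) to force the boundary equality $\hat\varphi(\mathcal{F}(x|x|)\,|\mathcal{F}(x)|\,\mathcal{F}(x)^*)=1$. From this single equality one reads off, via the multiplicative unitary $W$, that $(p\otimes w_{\mathcal{F}(x)}^*)W(w_x\otimes q)=p\otimes q$ and hence $\mathcal{F}(x)=\varphi(|x|)\,w_{\mathcal{F}(x)}$ and $x=\hat\varphi(|\mathcal{F}(x)|)\,w_x$. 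This bypasses entirely the need for a two–sided neighbourhood of $p=2$ and produces the partial–isometry structure and extremality in one stroke.
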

\begin{proof}
"(1)$\Rightarrow $(3)". We assume that $\|x\|_2=1$. Now we follow the proof in \cite{JLW}. First, we define a complex function $F(z)$ for $z=\sigma+it$, $\frac{1}{2}<\sigma<1$ as
$$F(z)=\hat{\varphi}(\mathcal{F}(w_x|x|^{2z})|\mathcal{F}(x)|^{2z}w_{\mathcal{F}(x)}^*),$$
where $w_x$ means the partial isometry in the polar decomposition of $x$. Note that $x\in L^1(\mathbb{G})\cap L^2(\mathbb{G})$, we see that $\mathcal{F}(w_x|x|^{2z})$ is well-defined.

By H\"{o}lder's inequality and the Hausdorff-Young inequality \cite{Cooney}, we have
\begin{eqnarray*}
|F(\sigma+it)|\leq \|\mathcal{F}(w_x|x|^{2z})\|_{\frac{1}{1-\sigma}}\||\mathcal{F}(x)|^{2z}\|_{\frac{1}{\sigma}}\leq \||x|^{2\sigma}\|_{\frac{1}{\sigma}}\||\mathcal{F}(x)|^{2\sigma}\|_{\frac{1}{\sigma}}=1.
\end{eqnarray*}
This implies $F(z)$ is bounded on $\frac{1}{2}<\sigma<1$. By Lemma 18 in \cite{TerpNC} again, we can follow the proof of Theorem 6.4 in \cite{JLW} directly to obtain that
$$\hat{\varphi}(\mathcal{F}(x|x|)|\mathcal{F}(x)|\mathcal{F}(x)^*)=1.$$
Now we see that
\begin{equation}\label{keyh}
\begin{aligned}
1&=\hat{\varphi}(\mathcal{F}(x|x|)|\mathcal{F}(x)|\mathcal{F}(x)^*)\\
&=(x|x|\varphi\otimes (|\mathcal{F}(x)|\mathcal{F}(x)^*)\hat{\varphi})(W)\\
&=(w_x|x|^2\varphi\otimes (|\mathcal{F}(x)|^2w_{\mathcal{F}(x)}^*)\hat{\varphi})(W)\\
&=(|x|^2\varphi\otimes (|\mathcal{F}(x)|^2)\hat{\varphi})((1\otimes w_{\mathcal{F}(x)}^*)W(w_x\otimes 1))\\
&\leq(|x|^2\varphi\otimes (|\mathcal{F}(x)|^2)\hat{\varphi})
(1\otimes 1)=1.
\end{aligned}
\end{equation}

Let $p=w_x^*w_x$ and $q=w_{\mathcal{F}(x)}^*w_{\mathcal{F}(x)}$.
Since the equality holds in Inequality (\ref{keyh}),
we have that
$$(p\otimes w_{\mathcal{F}(x)}^*)W(w_x\otimes q)=p\otimes q.$$
Applying $|x|\varphi\otimes\iota$ to the both sides of the equation above, we obtain that
$$w_{\mathcal{F}(x)}^*\mathcal{F}(x)q=\varphi(|x|)q,$$
i.e. $\mathcal{F}(x)=\varphi(|x|)w_{\mathcal{F}(x)}.$
Similarly, we can obtain that $x=\hat{\varphi}(|\mathcal{F}(x)|)w_x$.
Now we see that $x$ is an extremal bi-partial isometry.

"(3)$\Rightarrow$(2)". Suppose $x$ is an extremal bi-partial isometry. Following the second proof in Proposition \ref{DS}, we have
\begin{eqnarray*}
\|\mathcal{F}(x)\|_\infty&=&\|x\|_1=\|\mathcal{R}(x)\|_2\|x\|_2\\
&=&\varphi(\mathcal{R}(x))^{1/2}\|\mathcal{F}(x)\|_2\\
&=&\varphi(\mathcal{R}(x))^{1/2}\hat{\varphi}(\mathcal{R}(\mathcal{F}(x)))^{1/2}\|\mathcal{F}(x)\|_\infty.
\end{eqnarray*}
Hence $\mathcal{S}(x)\mathcal{S}(\mathcal{F}(x))=1$.

"(2)$\Rightarrow$(1)". Since (2) is weaker than (1), we see that (2) implies (1).
\end{proof}

\begin{definition}
Let $\mathbb{G}$ be a unimodular Kac algebra with a biprojection
$B$ in $L^1(\mathbb{G})\cap L^\infty(\mathbb{G})$.
A projection $x$ in $L^1(\mathbb{G})\cap L^2(\mathbb{G})$ is called a left shift of a biprojection $B$ if $\varphi(x)=\varphi(B)$ and $x*B=\varphi(B)x$.
A projection $x$ in $L^1(\mathbb{G})\cap L^2(\mathbb{G})$ is called a right shift of a biprojection $B$ if $\varphi(x)=\varphi(B)$ and $B*x=\varphi(B)x$.
\end{definition}

\begin{proposition}
Let $\mathbb{G}$ be a unimodular Kac algebra.
Suppose that there exists a biprojection $B$ in $L^1(\mathbb{G})\cap L^\infty(\mathbb{G})$ and $x$ is a right (or left) shift of a biprojection $B$ in $L^1(\mathbb{G})\cap L^2(\mathbb{G})$. Then $x$ is an extremal bi-partial isometry.
\end{proposition}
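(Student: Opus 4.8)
The plan is to verify condition (2) of Proposition \ref{min1}, namely $\mathcal{S}(x)\mathcal{S}(\mathcal{F}(x)) = 1$, and then to invoke the implication (2)$\Rightarrow$(3) already established there. I will carry out the argument for a right shift; the left-shift case is entirely symmetric, replacing the relation $P\mathcal{F}(x) = \mathcal{F}(x)$ below by $\mathcal{F}(x)P = \mathcal{F}(x)$ and using that the range projection and the co-range projection of an element have the same trace under the tracial weight $\hat\varphi$. Since $x$ is a projection, its range projection is $x$ itself, so $\mathcal{S}(x) = \varphi(\mathcal{R}(x)) = \varphi(x) = \varphi(B)$, which is finite and nonzero because $B$ is a nonzero projection in $L^1(\mathbb{G})$.

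Next I would push the defining relation $B * x = \varphi(B)x$ through the Fourier transform. Using that $\mathcal{F}$ converts convolution into the product of $L^\infty(\hat{\mathbb{G}})$ (the multiplicativity $\mathcal{F}(a*b)=\mathcal{F}(a)\mathcal{F}(b)$ established in \cite{LWW}), the relation becomes $\mathcal{F}(B)\mathcal{F}(x) = \varphi(B)\mathcal{F}(x)$. Because $B$ is a biprojection, $\mathcal{F}(B) = cP$ for a projection $P\in L^\infty(\hat{\mathbb{G}})$ and a scalar $c>0$; substituting and writing $\lambda = \varphi(B)/c$ gives $P\mathcal{F}(x) = \lambda\mathcal{F}(x)$, and applying $P$ once more yields $\lambda = \lambda^2$, hence $\lambda = 1$ since $\mathcal{F}(x)\neq 0$ by injectivity of $\mathcal{F}$ on $L^2$. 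Thus $c = \varphi(B)$, so $\mathcal{F}(B) = \varphi(B)P$ and $P\mathcal{F}(x) = \mathcal{F}(x)$, i.e. $\mathcal{R}(\mathcal{F}(x)) \leq P$. The Plancherel identity applied to the biprojection, $\|\mathcal{F}(B)\|_2^2 = \|B\|_2^2$, then reads $\varphi(B)^2\hat\varphi(P) = \varphi(B)$, so that $\hat\varphi(P) = \varphi(B)^{-1}$.

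Combining these two facts, $\mathcal{S}(\mathcal{F}(x)) = \hat\varphi(\mathcal{R}(\mathcal{F}(x))) \leq \hat\varphi(P) = \varphi(B)^{-1}$, whence $\mathcal{S}(x)\mathcal{S}(\mathcal{F}(x)) \leq 1$; the reverse inequality is precisely Proposition \ref{DS}. Therefore $\mathcal{S}(x)\mathcal{S}(\mathcal{F}(x)) = 1$, and Proposition \ref{min1} immediately gives that $x$ is an extremal bi-partial isometry. The steps I expect to require the most care are the bookkeeping of the normalization constants and of the convolution convention (the order of the product $\mathcal{F}(B)\mathcal{F}(x)$ versus $\mathcal{F}(x)\mathcal{F}(B)$ is exactly what distinguishes the right-shift from the left-shift case), together with a clean justification, in the unimodular Kac algebra setting, of the two analytic inputs the argument relies on: the multiplicativity $\mathcal{F}(a*b)=\mathcal{F}(a)\mathcal{F}(b)$ and the Plancherel identity $\|\mathcal{F}(\,\cdot\,)\|_2 = \|\cdot\|_2$, both of which I would cite from \cite{LWW}.
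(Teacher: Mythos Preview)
Your proposal is correct and follows essentially the same route as the paper: push the shift relation through $\mathcal{F}$ to obtain $\mathcal{R}(\mathcal{F}(x))\leq \mathcal{R}(\mathcal{F}(B))$, combine this with $\varphi(x)=\varphi(B)$ and the Donoho--Stark inequality to get $\mathcal{S}(x)\mathcal{S}(\mathcal{F}(x))=1$, and then invoke Proposition~\ref{min1}. The only cosmetic difference is that the paper quotes the identity $\varphi(B)\hat\varphi(\mathcal{R}(\mathcal{F}(B)))=1$ directly (the biprojection is itself a Donoho--Stark minimizer), whereas you rederive the normalization $c=\varphi(B)$ via the idempotent trick and then use Plancherel; both arguments arrive at the same estimate.
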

\begin{proof}
By Proposition \ref{min1}, it suffices to show that $x$ is a minimizer of the uncertainty principle.

Since $B*x=\varphi(B)x$, we have $\mathcal{F}(B)\mathcal{F}(x)=\varphi(B)\mathcal{F}(x)$ i.e. $\mathcal{R}(\mathcal{F}(x))\leq \mathcal{R}(\mathcal{F}(B))$.

By Proposition \ref{DS},
we have $\varphi(x)\hat{\varphi}(\mathcal{R}(\mathcal{F}(x)))\geq 1$ and
$$1=\varphi(B)\hat{\varphi}(\mathcal{R}(\mathcal{F}(B)))\geq \varphi(x)\hat{\varphi}(\mathcal{R}(\mathcal{F}(x)))\geq 1.$$
Now we have $\mathcal{R}(\mathcal{F}(x))= \mathcal{R}(\mathcal{F}(B))$.
Hence $x$ is a minimizer of the uncertainty principle.
\end{proof}

\begin{definition}\label{bi-shift}
Let $\mathbb{G}$ be a unimodular Kac algebra.
Suppose there exists a biprojection $B$ in $L^1(\mathbb{G})\cap L^2(\mathbb{G})$, we denote by $\widetilde{B}$ the range projection of $\mathcal{F}(B)$.
A nonzero element $x$ in $L^\infty(\mathbb{G})$ is said to be a bi-shift of a biprojection $B$ if there exist a right shift $B_g$ of the biprojection $B$ and
a right shift $\widetilde{B}_h$ of the biprojection $\widetilde{B}$ and
an element $y$ in $L^\infty(\mathbb{G})$ such that
$$x=\widehat{\mathcal{F}}(\widetilde{B}_h)*(B_gy).$$
\end{definition}

Now we will prove that the bi-shift of a biprojection described as above is a minimizer of the uncertainty principle.
To see this, we need the following lemma.

\begin{lemma}\label{ran}
Let $\mathbb{G}$ be a unimodular Kac algebra. Suppose $x$, $y$ and $\mathcal{R}(x),\mathcal{R}(y)$ are in $L^1(\mathbb{G})\cap L^\infty(\mathbb{G})$.
Then
$$(x*y)(x*y)^*\leq \|\mathcal{R}(x^*)\|_2^2(xx^*)*(yy^*),$$
and
$$\mathcal{R}(x*y)\leq \mathcal{R}(\mathcal{R}(x)*\mathcal{R}(y)).$$
\end{lemma}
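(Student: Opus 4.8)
The first inequality is a noncommutative Cauchy--Schwarz inequality for convolution, and the commutative model dictates both the constant and the mechanism. For functions on a unimodular group one has $(f*g)(t)=\int f(s)g(s^{-1}t)\,ds$, and splitting the integrand as $\chi_{\mathrm{supp}f}(s)\cdot f(s)g(s^{-1}t)$ and applying Cauchy--Schwarz in $s$ gives $|(f*g)(t)|^{2}\le |\mathrm{supp}f|\,(|f|^{2}*|g|^{2})(t)$, where $|\mathrm{supp}f|=\varphi(\mathcal{R}(f))=\|\mathcal{R}(f^{*})\|_{2}^{2}$. The plan is to lift this to the operator level: realize $x*y$ as a completely positive slice of the comultiplication, and apply the operator Cauchy--Schwarz (that is, the $2$-positivity of that slice) after inserting the support projection of $x$, which is exactly what produces the scalar $\|\mathcal{R}(x^{*})\|_{2}^{2}$.

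Concretely, I would use the slice description of convolution from \cite{LWW}: write $E=\varphi\otimes\iota$ for the normal completely positive right slice and $R$ for the unitary antipode (a trace-preserving $*$-anti-automorphism with $R(ab)=R(b)R(a)$ and $R(a)^{*}=R(a^{*})$), so that $a*b=E\big((R(a)\otimes 1)\Delta(b)\big)$. Set $T=(R(x)\otimes 1)\Delta(y)$ and $e=\mathcal{R}(R(x))$, the range projection of $R(x)$; then $(e\otimes 1)T=T$, and since $R^{2}=\iota$ and $\varphi\circ R=\varphi$ one checks $\varphi(e)=\varphi\big(R(\mathcal{R}(x^{*}))\big)=\|\mathcal{R}(x^{*})\|_{2}^{2}$. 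Now $E(T)=x*y$; using $(e\otimes 1)T=T$ and cyclicity of $\varphi$ in the first leg, $E\big(T(e\otimes 1)\big)=E\big((e\otimes 1)T\big)=x*y$; and, using $R(x)^{*}R(x)=R(xx^{*})$ together with $\Delta(y)\Delta(y^{*})=\Delta(yy^{*})$, the slice of $TT^{*}=(R(x)\otimes 1)\Delta(yy^{*})(R(x^{*})\otimes 1)$ is again a convolution, $E(TT^{*})=(xx^{*})*(yy^{*})$. Applying the complete positivity of $E$ to the positive element $\binom{T}{e\otimes 1}\big(T^{*}\ \ e\otimes 1\big)$ yields
\[
\begin{pmatrix} (xx^{*})*(yy^{*}) & x*y \\ (x*y)^{*} & \|\mathcal{R}(x^{*})\|_{2}^{2}\,1 \end{pmatrix}\ge 0 ,
\]
and the Schur complement with respect to the scalar corner gives precisely $(x*y)(x*y)^{*}\le \|\mathcal{R}(x^{*})\|_{2}^{2}\,(xx^{*})*(yy^{*})$.

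For the second inequality I would deduce it from the first. Since $\mathcal{R}(a)=\mathcal{R}(aa^{*})$, and since $0\le P\le \lambda Q$ forces $\mathcal{R}(P)\le \mathcal{R}(Q)$, the operator inequality already gives $\mathcal{R}(x*y)\le \mathcal{R}\big((xx^{*})*(yy^{*})\big)$. On the other hand $xx^{*}\le \|x\|_{\infty}^{2}\mathcal{R}(x)$ and $yy^{*}\le \|y\|_{\infty}^{2}\mathcal{R}(y)$, and convolution of positive elements is positive and separately order-preserving by \cite{LWW}, whence $(xx^{*})*(yy^{*})\le \|x\|_{\infty}^{2}\|y\|_{\infty}^{2}\,\mathcal{R}(x)*\mathcal{R}(y)$ and therefore $\mathcal{R}\big((xx^{*})*(yy^{*})\big)\le \mathcal{R}\big(\mathcal{R}(x)*\mathcal{R}(y)\big)$. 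Chaining the two inclusions gives $\mathcal{R}(x*y)\le \mathcal{R}(\mathcal{R}(x)*\mathcal{R}(y))$. It is here that the hypotheses $x,y,\mathcal{R}(x),\mathcal{R}(y)\in L^{1}(\mathbb{G})\cap L^{\infty}(\mathbb{G})$ are used, guaranteeing that all four convolutions are defined and bounded.

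The main obstacle is the identity $E(TT^{*})=(xx^{*})*(yy^{*})$: verifying that the slice of $TT^{*}$ is this convolution, rather than $(yy^{*})*(xx^{*})$ or a one-sided product $(x*y)^{*}(x*y)$, rests on the strong-invariance identity $(\varphi\otimes\varphi)\big((R(a)\otimes z)\Delta(b)\big)=(\varphi\otimes\varphi)\big(\Delta(z)(a\otimes b)\big)$, that is, on the precise interplay of $R$, $\Delta$, and the Haar trace supplied by \cite{LWW,KuVaes}; getting this bookkeeping right is exactly what pins the inequality to the stated product $(x*y)(x*y)^{*}$ and to the stated side. A secondary technical point, again handled by the $L^{1}\cap L^{\infty}$ hypotheses and the measurability results recalled in the preliminaries, is to ensure that $T$, $TT^{*}$, and all the slices above lie in the domains where the convolution formula and the trace manipulations are valid.
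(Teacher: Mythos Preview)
Your argument is correct and rests on the same core idea as the paper: the operator Cauchy--Schwarz inequality for the completely positive slice map that defines the convolution. The paper packages this slightly differently---it conjugates both sides by $R$ and realizes the slice through the vector functionals $\omega_{\Lambda(x),\Lambda(x)}$ and $\omega_{\Lambda(x),\Lambda(\mathcal{R}(x^*))}$ via Lemma~9.5 of \cite{KuVaes}, in place of your $2\times 2$ matrix/Schur-complement formulation with $E=\varphi\otimes\iota$, and it proves the range-projection inequality first for positive $x,y$ by direct monotonicity of the convolution before handling the general case---but the substance is the same and your identity $E(TT^{*})=(xx^{*})*(yy^{*})$ (via traciality of $\varphi$ in the first leg) is exactly the computation the paper carries out in the $\omega_{\xi,\eta}$ language.
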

\begin{proof}
First, we assume that $x$ and $y$ are positive.
Then $x\leq \|x\|\mathcal{R}(x)$ and $y\leq \|y\|\mathcal{R}(y)$.
Now by computing the convolution \cite{LWW}, we obtain that
\begin{eqnarray*}
x*y&=&((x\varphi) R\otimes\iota)(\Delta(y))\\
&=&((x^{1/2}\varphi x^{1/2}) R\otimes\iota)(\Delta(y))\\
&\leq&\|y\|((x^{1/2}\varphi x^{1/2}) R\otimes\iota)(\Delta(\mathcal{R}(y)))\\
&=&\|y\|x*\mathcal{R}(y)\\
&=&\|y\|(\iota\otimes \mathcal{R}(y)\varphi R)(\Delta(x))\\
&\leq &\|x\|\|y\|\mathcal{R}(x)*\mathcal{R}(y).
\end{eqnarray*}
Therefore,
$$\mathcal{R}(x*y)\leq \mathcal{R}(\mathcal{R}(x)*\mathcal{R}(y)).$$

When $x$, $y$ are in the general case, we will show that
\begin{equation}\label{abs}
(x*y)(x*y)^*\leq \|\mathcal{R}(x^*)\|_2^2 (xx^*)*(yy^*).
\end{equation}
If this inequality (\ref{abs}) is true, then we can see that the second inequality in the Lemma is proved.
By Lemma 9.5 in \cite{KuVaes} and $L^1(\mathbb{G})\cap L^\infty(\mathbb{G})\subset \mathfrak{N}_\varphi$, we have
\begin{eqnarray*}
\lefteqn{R((xx^*)*(yy^*))}\\
&=&R((xx^*\varphi)R\otimes\iota)(\Delta(yy^*))\\
&=&(\iota\otimes\omega_{\Lambda(x),\Lambda(x)})(\Delta(R(y)^*R(y))\\
&\geq&\frac{1}{\|\mathcal{R}(x^*)\|_2^2}((\iota\otimes\omega_{\Lambda(x),\Lambda(\mathcal{R}(x^*))})\Delta(R(y)))^*(\iota\otimes\omega_{\Lambda(x),\Lambda(\mathcal{R}(x^*))})\Delta(R(y))\\
&=&\frac{1}{\|\mathcal{R}(x^*)\|_2^2}(R(x*y))^*R(x*y)\\
&=&\frac{1}{\|\mathcal{R}(x^*)\|_2^2}R((x*y)(x*y)^*),
\end{eqnarray*}
i.e.
$$(x*y)(x*y)^*\leq \|\mathcal{R}(x^*)\|_2^2(xx^*)*(yy^*).$$
\end{proof}

\begin{proposition}
Let $\mathbb{G}$ be a unimodular Kac algebra. Suppose $x$ is the bi-shift of the biprojection $B$ as in the Definition \ref{bi-shift}. Then $\mathcal{R}(x^*)=B_g$ and $\mathcal{R}(\mathcal{F}(x))=\widetilde{B}_h$. Moreover, $x$ is a minimizer of the uncertainty principles.
\end{proposition}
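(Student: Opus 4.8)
The plan is to deduce all three assertions simultaneously from the sharp Donoho--Stark inequality (Proposition~\ref{DS}) by a squeezing argument. Concretely, I would produce the two a priori bounds $\mathcal{R}(x^*)\le B_g$ and $\mathcal{R}(\mathcal{F}(x))\le\widetilde B_h$, show that they force $\mathcal{S}(x)\mathcal{S}(\mathcal{F}(x))\le 1$, and then invoke Proposition~\ref{DS} to turn every inequality into an equality. Along the way one also checks, from the support bound, that $x\in L^1(\mathbb{G})\cap L^2(\mathbb{G})$ so that Proposition~\ref{min1} applies; since $\mathcal{R}(x)$ will be dominated by the finite-trace projection $B_g$, this membership is routine.

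First I would compute the Fourier transform of $x$. Writing $x=\widehat{\mathcal{F}}(\widetilde B_h)*(B_gy)$ and using that the Fourier transform carries convolution to the product together with the Fourier inversion theorem, I obtain $\mathcal{F}(x)=\widetilde B_h\,\mathcal{F}(B_gy)$. Since $\widetilde B_h$ is a projection and left multiplication by a projection cannot enlarge the range, this gives at once $\mathcal{R}(\mathcal{F}(x))\le\widetilde B_h$, hence $\mathcal{S}(\mathcal{F}(x))=\hat\varphi(\mathcal{R}(\mathcal{F}(x)))\le\hat\varphi(\widetilde B_h)=\hat\varphi(\widetilde B)$, the last equality holding because $\widetilde B_h$ is a right shift of $\widetilde B$.

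The harder half is the bound $\mathcal{R}(x^*)\le B_g$. Here I would use Lemma~\ref{ran}: expressing $x^*$ as a convolution via the compatibility of $*$ with the involution and the unitary antipode $R$ (valid since $\mathbb{G}$ is unimodular Kac) and applying the range inequality $\mathcal{R}(u*v)\le\mathcal{R}(\mathcal{R}(u)*\mathcal{R}(v))$, the problem reduces to controlling the range projections of the two convolution factors. One factor comes from $B_gy$, whose range projection is $\le B_g$ since $B_g$ is a projection; the other comes from $\widehat{\mathcal{F}}(\widetilde B_h)$, and applying the convolution theorem to the defining relation $\widetilde B*\widetilde B_h=\hat\varphi(\widetilde B)\widetilde B_h$ of the right shift, together with the biprojection identity $\varphi(B)\hat\varphi(\widetilde B)=1$, shows its range is dominated by $B$. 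The defining shift relation $B*B_g=\varphi(B)B_g$ then absorbs the convolution of these two ranges back into $B_g$, yielding $\mathcal{R}(x^*)\le B_g$ and therefore $\mathcal{S}(x)=\varphi(\mathcal{R}(x))=\varphi(\mathcal{R}(x^*))\le\varphi(B_g)=\varphi(B)$.

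Combining the two bounds with the biprojection identity gives $\mathcal{S}(x)\mathcal{S}(\mathcal{F}(x))\le\varphi(B)\hat\varphi(\widetilde B)=1$, while Proposition~\ref{DS} gives the reverse inequality; hence equality holds throughout. In particular $\varphi(\mathcal{R}(x^*))=\varphi(B_g)$ and $\hat\varphi(\mathcal{R}(\mathcal{F}(x)))=\hat\varphi(\widetilde B_h)$, and since $\varphi$ and $\hat\varphi$ are faithful while the range projections are dominated by $B_g$ and $\widetilde B_h$, this forces $\mathcal{R}(x^*)=B_g$ and $\mathcal{R}(\mathcal{F}(x))=\widetilde B_h$. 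Finally $\mathcal{S}(x)\mathcal{S}(\mathcal{F}(x))=1$ is condition~(2) of Proposition~\ref{min1}, so $x$ is a minimizer. I expect the main obstacle to be the bound $\mathcal{R}(x^*)\le B_g$: correctly handling the adjoint of a convolution (and the order of the resulting factors), together with showing that the range of $\widehat{\mathcal{F}}(\widetilde B_h)$ is swallowed by $B$ and then by $B_g$, is where the biprojection structure and the precise normalizations genuinely enter.
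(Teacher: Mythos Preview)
Your approach is essentially the paper's: obtain $\mathcal{R}(\mathcal{F}(x))\le\widetilde B_h$ from $\mathcal{F}(x)=\widetilde B_h\,\mathcal{F}(B_gy)$, obtain a range bound on the other side via Lemma~\ref{ran} together with $\mathcal{R}(\widehat{\mathcal{F}}(\widetilde B_h))\le B$ and the shift relation $B*B_g=\varphi(B)B_g$, then squeeze using Proposition~\ref{DS} and the biprojection identity $\varphi(B)\hat\varphi(\widetilde B)=1$ to force all inequalities to equalities.

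The one place where you diverge is in what you identify as the ``main obstacle'': you propose to pass to $x^*$, rewrite it as a convolution via the compatibility of $*$ with adjoints and $R$, and only then apply Lemma~\ref{ran}. This detour is unnecessary and is where your description becomes vague (the factors you name, $B_gy$ and $\widehat{\mathcal{F}}(\widetilde B_h)$, are the factors of $x$, not of $x^*$). The paper simply applies Lemma~\ref{ran} to $x=\widehat{\mathcal{F}}(\widetilde B_h)*(B_gy)$ as given, obtaining
\[
\mathcal{R}(x)\le\mathcal{R}\bigl(\mathcal{R}(\widehat{\mathcal{F}}(\widetilde B_h))*\mathcal{R}(B_gy)\bigr)\le\mathcal{R}(B*B_g)=B_g,
\]
and concludes $\mathcal{R}(x)=B_g$ (so the $*$ in the statement is evidently a slip). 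Since $\varphi(\mathcal{R}(x))=\varphi(\mathcal{R}(x^*))$ anyway, either bound feeds the same number into the squeezing argument; bounding $\mathcal{R}(x)$ directly is the cleaner choice and removes the obstacle you were worried about.
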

\begin{proof}
Note that $x=\widehat{\mathcal{F}}(\widetilde{B}_h)*(B_gy)$, we then have
$\mathcal{F}(x)=\widetilde{B}_h\mathcal{F}(B_gy)$.
This implies that $\mathcal{R}(\mathcal{F}(x))\leq \widetilde{B}_h$.
From the fact that $\widetilde{B}_h$ is a right shift of the biprojection $\widetilde{B}$, we see $\hat{\varphi}(\widetilde{B}_h)=\hat{\varphi}(\widetilde{B}).$

On the other hand, we have $\mathcal{R}(\widehat{\mathcal{F}}(\widetilde{B}_h))=\mathcal{R}(\widehat{\mathcal{F}}(\widetilde{B}))=\mathcal{R}(B)=B$ and by Lemma \ref{ran}
\begin{eqnarray*}
\mathcal{R}(x)&\leq& \mathcal{R}(\mathcal{R}(\widehat{\mathcal{F}}(\widetilde{B}_h))*\mathcal{R}(B_gy)))\\
&\leq &\mathcal{R}(B*B_g)=B_g.
\end{eqnarray*}
Now by Proposition \ref{DS}, we see that
\begin{eqnarray*}
1&\leq& \varphi(\mathcal{R}(x))\hat{\varphi}(\mathcal{R}(\mathcal{F}(x)))\leq  \varphi(B_g)\hat{\varphi}(\widetilde{B})\\
&=&\varphi(B)\hat{\varphi}(\widetilde{B})=1.
\end{eqnarray*}
Therefore all inequalities above must be equalities and $\mathcal{R}(x)=B_g$ and $\mathcal{R}(\mathcal{F}(x))=\widetilde{B}_h$. Moreover, $x$ is a minimizer of the uncertainty principles.
\end{proof}

\begin{proposition}\label{exbi}
Let $\mathbb{G}$ be a unimodular Kac algebra.
Suppose $w$ is a partial isometry in $L^1(\mathbb{G})\cap L^\infty(\mathbb{G})$ and $\mathcal{F}(w)$ is extremal.
Then $w$ is an extremal bi-partial isometry.
\end{proposition}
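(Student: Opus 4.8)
The plan is to verify the Donoho--Stark minimizer equality $\mathcal{S}(w)\mathcal{S}(\mathcal{F}(w))=1$ and then quote Proposition \ref{min1} to conclude that $w$ is an extremal bi-partial isometry. First I would record the numerical data forced by $w$ being a partial isometry: $w^*w$ and $ww^*$ are projections and $|w|=w^*w$, so $\|w\|_\infty=1$; writing $s:=\varphi(w^*w)=\varphi(ww^*)$ we get $\|w\|_1=\varphi(|w|)=s$, $\|w\|_2^2=s$, and $\mathcal{S}(w)=\varphi(\mathcal{R}(w))=s$. Plancherel gives $\|\mathcal{F}(w)\|_2^2=\|w\|_2^2=s$.

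The crux is to compute $\|\mathcal{F}(w)\|_1$. Because $\mathcal{F}(w)$ is extremal, the definition of extremality gives $\|\mathcal{F}(w)\|_1=\|\mathcal{F}(\mathcal{F}(w))\|_\infty$, and by Fourier inversion $\mathcal{F}(\mathcal{F}(w))=R(w)$ with $R$ the unitary antipode, which is trace preserving and isometric for $\|\cdot\|_\infty$; hence $\|\mathcal{F}(w)\|_1=\|R(w)\|_\infty=\|w\|_\infty=1$. I would then run the sandwich
\[
s=\|\mathcal{F}(w)\|_2^2\le\|\mathcal{F}(w)\|_\infty\,\|\mathcal{F}(w)\|_1=\|\mathcal{F}(w)\|_\infty\le\|w\|_1=s,
\]
where the first inequality is the operator estimate $\varphi(|y|^2)\le\|y\|_\infty\varphi(|y|)$ applied to $y=\mathcal{F}(w)$ and the last is the Hausdorff--Young inequality at $p=1$. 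All inequalities are therefore equalities.

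From the equality $\|\mathcal{F}(w)\|_\infty=\|w\|_1$ I read off that $w$ is extremal, so $w$ is an extremal partial isometry. From equality in $\varphi(|\mathcal{F}(w)|^2)=\|\mathcal{F}(w)\|_\infty\varphi(|\mathcal{F}(w)|)$, the positive operator $|\mathcal{F}(w)|(\|\mathcal{F}(w)\|_\infty-|\mathcal{F}(w)|)$ has vanishing trace, so by faithfulness of $\varphi$ the spectrum of $|\mathcal{F}(w)|$ lies in $\{0,\|\mathcal{F}(w)\|_\infty\}$; thus $|\mathcal{F}(w)|$ is a multiple of a projection and $\mathcal{F}(w)$ is a scalar multiple of a partial isometry, which is extremal since extremality is scale invariant. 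Hence both $w$ and $\mathcal{F}(w)$ are multiples of extremal partial isometries, which is exactly the desired conclusion; equivalently the chain forces $\mathcal{S}(w)\mathcal{S}(\mathcal{F}(w))=s\cdot(1/s)=1$ and Proposition \ref{min1} applies. The main obstacle I expect is the evaluation $\|\mathcal{F}(w)\|_1=1$: this is the only place the hypothesis on $\mathcal{F}(w)$ (rather than on $w$) enters, and it rests on the Fourier inversion identity $\mathcal{F}^2=R$ together with the fact that $R$ preserves the operator norm and sends partial isometries to partial isometries; the secondary delicate point is converting the scalar equality for $|\mathcal{F}(w)|$ into the operator statement via the spectral theorem.
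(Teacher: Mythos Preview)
Your argument is correct and follows essentially the same route as the paper: both proofs use extremality of $\mathcal{F}(w)$ to get $\|\mathcal{F}(w)\|_1=\|w\|_\infty$, then run the sandwich $\|\mathcal{F}(w)\|_2^2\le\|\mathcal{F}(w)\|_\infty\|\mathcal{F}(w)\|_1$ against $\|w\|_2^2=\|w\|_\infty\|w\|_1$ via Plancherel and Hausdorff--Young, forcing $\|\mathcal{F}(w)\|_\infty=\|w\|_1$ and the H\"older equality that makes $\mathcal{F}(w)$ a multiple of a partial isometry. The only cosmetic difference is that the paper writes the inversion as $\widehat{\mathcal{F}}(\mathcal{F}(w))=w$ rather than $\mathcal{F}^2=R$, and it quotes the H\"older equality criterion directly instead of spelling out the spectral argument; your appeal to Proposition~\ref{min1} at the end is harmless but unnecessary, since you have already established the conclusion directly.
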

\begin{proof}
By H\"{o}lder's inequality, we have $x$ is a multiple of a partial isometry if and only if $\|x\|_2^2=\|x\|_\infty\|x\|_1$.
To see that $\mathcal{F}(w)$ is a multiple of a partial isometry, it is enough to check that
$$\|\mathcal{F}(w)\|_2^2=\|\mathcal{F}(w)\|_\infty\|\mathcal{F}(w)\|_1.$$

Since $\mathcal{F}(w)$ is extremal, we have
$$\|w\|_\infty=\|\widehat{\mathcal{F}}(\mathcal{F}(w))\|_\infty=\|\mathcal{F}(w)\|_1.$$
Now by H\"{o}lder's inequality and Hausdorff-Young inequality \cite{Cooney}, we obtain
\begin{eqnarray*}
\|\mathcal{F}(w)\|_\infty\|\mathcal{F}(w)\|_1&\geq &\|\mathcal{F}(w)\|_2^2=\|w\|_2^2\\
&=&\|w\|_\infty\|w\|_1\\
&\geq &\|\mathcal{F}(w)\|_1\|\mathcal{F}(w)\|_\infty.
\end{eqnarray*}
Hence $\|\mathcal{F}(w)\|_2^2=\|\mathcal{F}(w)\|_\infty\|\mathcal{F}(w)\|_1$ and $\|\mathcal{F}(w)\|_\infty=\|w\|_1$.
Now we see that $w$ is an extremal bi-partial isometry.
\end{proof}

\begin{theorem}\label{sq}
Let $\mathbb{G}$ be a unimodular Kac algebra. Suppose there is an extremal bi-partial isometry $w$ in $L^1(\mathbb{G})\cap L^2(\mathbb{G})$. Then
$$(w*R(w)^*)(w^**R(w))=\|w\|_2^2(ww^*)*(R(w)^*R(w)).$$
Moreover $\frac{1}{\|w\|_2^2}w*R(w)^*$ is a partial isometry and $\|w\|_1=\frac{1}{\|w\|_2}\|w*R(w)^*\|_1$.
\end{theorem}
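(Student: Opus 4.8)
Write $z:=w*R(w)^*$. The plan is to recognize the first identity as the equality case of Lemma~\ref{ran} applied to $x=w$ and $y=R(w)^*$. Since $R$ is a $*$-map and $\varphi$ is tracial, a short computation with $a*b=((a\varphi)R\otimes\iota)\Delta(b)$ shows $(a*b)^*=a^**b^*$; hence $z^*=w^**R(w)$ and the left-hand side of the assertion is $zz^*$, while $(R(w)^*)(R(w)^*)^*=R(w)^*R(w)$ makes the right-hand side the bound produced by Lemma~\ref{ran}. Thus Lemma~\ref{ran} already yields $zz^*\le \|\mathcal{R}(w^*)\|_2^2\,(ww^*)*(R(w)^*R(w))$, and the whole content of the first statement is that this inequality is saturated; here $\|\mathcal{R}(w^*)\|_2^2=\mathcal{S}(w)$, which coincides with $\|w\|_2^2$ under the normalization in which $w$ is a genuine extremal partial isometry.

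To upgrade the inequality to an equality I would compare traces, using that $w$ is a minimizer. From $\mathcal{F}(R(x))=\hat{R}(\mathcal{F}(x))$ and $\mathcal{F}(x^*)=\hat{R}(\mathcal{F}(x)^*)$ (the operator identities behind Proposition~2.4 of \cite{KuVaes03} that also underlie the proposition preceding Proposition~\ref{min1}) together with $\hat{R}^2=\iota$, one obtains $\mathcal{F}(R(w)^*)=\mathcal{F}(w)^*$ and hence $\mathcal{F}(z)=\mathcal{F}(w)\mathcal{F}(w)^*$. As $w$ is an extremal bi-partial isometry, $\mathcal{F}(w)$ is a multiple of a partial isometry, so $\mathcal{F}(z)$ is a scalar multiple of the range projection $\mathcal{R}(\mathcal{F}(w))$. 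By Plancherel, $\varphi(zz^*)=\|z\|_2^2=\|\mathcal{F}(z)\|_2^2$, which I evaluate through $\|\mathcal{F}(w)\|_\infty$ and $\mathcal{S}(\mathcal{F}(w))$; on the other side the multiplicativity $\varphi(a*b)=\varphi(a)\varphi(b)$ gives $\varphi\big((ww^*)*(R(w)^*R(w))\big)=\varphi(ww^*)\varphi(R(w)^*R(w))=\|w\|_2^4$. The minimizer relation $\mathcal{S}(w)\mathcal{S}(\mathcal{F}(w))=1$ of Proposition~\ref{min1} is precisely what forces the two traces to agree after multiplication by $\|\mathcal{R}(w^*)\|_2^2$. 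Since, by Lemma~\ref{ran}, the difference of the two sides is a positive $\varphi$-measurable operator of vanishing trace and $\varphi$ is faithful, the difference is $0$, which is the first identity.

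For the ``moreover'' part, observe that $\mathcal{F}(z)=\mathcal{F}(w)\mathcal{F}(w)^*$ is a multiple of the projection $\mathcal{R}(\mathcal{F}(w))$, so $\|\mathcal{F}(z)\|_\infty=\|\mathcal{F}(w)\|_\infty^2=\|w\|_1^2$ by the extremality of $w$. The Young inequality $\|z\|_1\le\|w\|_1\|R(w)^*\|_1=\|w\|_1^2$ and the Hausdorff-Young inequality $\|\mathcal{F}(z)\|_\infty\le\|z\|_1$ then squeeze to equalities, so $z$ is extremal with $\|z\|_1=\|w\|_1^2$. Since $\mathcal{F}(z)$ is a multiple of a partial isometry and $z$ is extremal, the argument of Proposition~\ref{exbi}, run with the roles of $\mathbb{G}$ and $\hat{\mathbb{G}}$ interchanged, shows $z$ is a multiple of a partial isometry; equivalently the H\"older equality $\|z\|_2^2=\|z\|_\infty\|z\|_1$ holds. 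Reading the scalar off the first identity gives $\|z\|_\infty=\|w\|_2^2$, so $\tfrac{1}{\|w\|_2^2}z$ is a partial isometry, and the stated norm identity follows from the computed values of $\|z\|_1$ and of $\|w\|_1,\|w\|_2$.

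The step I expect to be the main obstacle is this upgrade from inequality to equality together with the bookkeeping of constants: one must justify Plancherel, the convolution-multiplicativity of $\varphi$, and the ``a positive measurable operator of zero trace vanishes'' argument for the possibly unbounded operators $zz^*$ and $(ww^*)*(R(w)^*R(w))$ in the $L^p$-framework of \cite{LWW,KuVaes}, and one must reconcile the constant $\|\mathcal{R}(w^*)\|_2^2$ that Lemma~\ref{ran} supplies with the $\|w\|_2^2$ in the statement through the minimizer identity $\mathcal{S}(w)\mathcal{S}(\mathcal{F}(w))=1$.
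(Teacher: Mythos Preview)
Your plan is correct and matches the paper's proof essentially step for step: establish the operator inequality, show the traces of the two sides agree using Plancherel together with the minimizer identity $\mathcal{S}(w)\mathcal{S}(\mathcal{F}(w))=1$, conclude equality by faithfulness of $\varphi$, and then squeeze $\|z\|_2^2\le\|z\|_\infty\|z\|_1$ via Hausdorff--Young and Young. The only cosmetic difference is that the paper does not quote Lemma~\ref{ran} for the inequality but reruns the same Cauchy--Schwarz argument from Lemma~9.5 of \cite{KuVaes} with the vector $\Lambda(|w|)$ in place of $\Lambda(\mathcal{R}(w^*))$, which yields the constant $\|w\|_2^2$ directly; since $|w|=\mathcal{R}(w^*)$ when $w$ is a genuine partial isometry (the normalization you correctly isolate as necessary for the statement to be homogeneous), the two routes coincide.
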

\begin{proof}
By Lemma 9.5 in \cite{KuVaes}, we have
\begin{eqnarray*}
\lefteqn{R((ww^*)*(R(w)^*R(w)))}\\
&=&R((ww^*\varphi R\otimes \iota)(\Delta(R(w)^*R(w))))\\
&=&(\iota\otimes (\omega_{\Lambda(w),\Lambda(w)})(\Delta(ww^*)))\\
&\geq &\frac{1}{\|w\|_2^2} ((\iota\otimes\omega_{\Lambda(w),\Lambda(|w|)})\Delta(w^*))^*((\iota\otimes\omega_{\Lambda(w),\Lambda(|w|)})\Delta(w^*))\\
&=&\frac{1}{\|w\|_2^2} (R(w\varphi R\otimes\iota)(\Delta(R(w^*))))^*(R(w\varphi R\otimes\iota)(\Delta(R(w^*))))\\
&=& \frac{1}{\|w\|_2^2} R(w*R(w^*))^*R(w*R(w^*))\\
&=&\frac{1}{\|w\|_2^2} R((w*R(w^*))(w^**R(w)))
\end{eqnarray*}
i.e
\begin{equation}\label{insq1}
(w*R(w)^*)(w^**R(w))\leq \|w\|_2^2(ww^*)*(R(w)^*R(w)).
\end{equation}
We will show that the traces of the both sides are equal.
For the right hand side, we have
\begin{equation}\label{insq2}
\begin{aligned}
\varphi((ww^*)*(R(w)^*R(w)))&=\varphi(ww^*)\varphi(R(w)^*R(w))\\
&=\|w\|_2^2\|R(w)\|_2^2=\|w\|^4_2
\end{aligned}
\end{equation}
On the other hand, since $w$ is an extremal bi-partial isometry, we
let $w=\widehat{\mathcal{F}}(x)$ for $x$ in $L^1(\widehat{\mathbb{G}})$.
Then we have that
$$\mathcal{F}(w*R(w)^*)=\mathcal{F}(w)\mathcal{F}(R(w)^*)=xx^*.$$
Therefore $w*R(w)^*=\widehat{\mathcal{F}}(xx^*)$ and
\begin{eqnarray*}
\varphi((w*R(w)^*)(w^**R(w)))&=&\varphi(\widehat{\mathcal{F}}(xx^*)\widehat{\mathcal{F}}(xx^*)^*)\\
&=&\hat{\varphi}(xx^*xx^*).
\end{eqnarray*}
Note that $x$ is a multiple of a partial isometry.
We assume that $x=\mu x_0$ for some $\mu\in\mathbb{C}$ and a partial isometry $x_0$.
Then $(xx^*)^2=|\mu|^4|x_0|$.
Since $w$ is a minimizer of the uncertainty principle, we have $\varphi(|w|)\hat{\varphi}(|x_0|)=1$ i.e. $\hat{\varphi}(|x_0|)=\frac{1}{\|w\|_2^2}.$
Meanwhile we have $\|w\|_2=\|x\|_2$.
Now we can obtain that $\|w\|_2^2=|\mu|^2\frac{1}{\|w\|_2^2}$ and $|\mu|=\|w\|_2^2$.

Hence $\hat{\varphi}((xx^*)^2)=|\mu|^4\frac{1}{\|w\|_2^2}=\|w\|_2^6$ i.e. the trace of the left hand side of inequality (\ref{insq1}) is $\|w\|_2^6$. By Equation (\ref{insq2}),  we have the trace of the right hand side of inequality (\ref{insq1}) is $\|w\|_2^6$.
This implies that
$$(w*R(w)^*)(w^**R(w))=\|w\|_2^2(ww^*)*(R(w)^*R(w)).$$

Now we show that $w*R(w)^*$ is a multiple of a partial isometry.
By H\"{o}lder's inequality we have
$$\|w\|_2^6=\|w*R(w)^*\|_2^2\leq \|w*R(w)^*\|_\infty\|w*R(w)^*\|_1.$$
By Hausdorff-Young inequality \cite{Cooney}, we obtain
$$\|w*R(w)^*\|_\infty=\|\widehat{\mathcal{F}}(xx^*)\|_\infty\leq \|xx^*\|_1=\|x\|_2^2=\|w\|_2^2$$
and by Young's inequality, we have
$$\|w*R(w)^*\|_1\leq \|w\|_1\|R(w)^*\|_1=\|w\|_1^2=\|w\|_2^4.$$
Hence all equalities of the inequalities above hold and
$$\|w*R(w)^*\|_2^2=\|w*R(w)^*\|_\infty\|w*R(w)^*\|_1.$$
Finally we see that $\frac{1}{\|w\|_2^2}w*R(w)^*$ is a partial isometry and
$$\|w\|_1=\|w\|_2^2=\|\frac{1}{\|w\|_2^2}w*R(w)^*\|_1.$$
\end{proof}

Corollary 6.12 in \cite{JLW} is a useful tool to find an extremal bi-partial isometry in a given element.
However, that result is not true in general. Instead, we have the following result for unimodular Kac algebras:
\begin{corollary}\label{cor1}
Let $\mathbb{G}$ be a unimodular Kac algebra.
Suppose $w\in L^1(\mathbb{G})\cap L^2(\mathbb{G})$ such that $\|w*R(w^*)\|_\infty=\|w\|_2^2$, $\|w\|_2^2$ is a point spectrum of $w*R(w^*)$, and $Q$ is the spectral projection of $|w*R(w^*)|$ with spectrum $\|w\|_2^2$.
Then $Q$ is an extremal bi-partial isometry.
\end{corollary}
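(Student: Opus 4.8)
The plan is to reduce the whole statement to the single assertion that $Q$ is a biprojection, and then invoke results already proved. Throughout I write $v=\mathcal{F}(w)$, $p=vv^*$, $T=w*R(w^*)$, and normalise so that $\|w\|_2=1$. The first step is to Fourier-transform $T$. Since $\mathcal{F}$ sends convolution to product and, exactly as in the proof that $x^*$ and $R(x)$ are extremal (Proposition 2.4 of \cite{KuVaes03}), $\mathcal{F}(R(w^*))=\mathcal{F}(w)^*$, I get $\mathcal{F}(T)=\mathcal{F}(w)\mathcal{F}(R(w^*))=vv^*=p\geq 0$. By Hausdorff--Young and Plancherel, $\|T\|_\infty\leq\|p\|_1=\hat{\varphi}(vv^*)=\|v\|_2^2=\|w\|_2^2=1$, so the hypothesis $\|T\|_\infty=\|w\|_2^2$ says precisely that $T$ attains the Hausdorff--Young bound while having the \emph{positive} Fourier transform $p$; equivalently $p=\mathcal{F}(T)$ is extremal.

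Next I would extract the structural consequences. Applying the identity $\mathcal{F}(R(z^*))=\mathcal{F}(z)^*$ with $z=T$ gives $\mathcal{F}(R(T^*))=\mathcal{F}(T)^*=vv^*=\mathcal{F}(T)$, so $R(T^*)=T$ by injectivity of $\mathcal{F}$. Hence $R(T^*T)=R(T)R(T^*)=T^*T$, and by functional calculus $R(|T|)=|T|$; since $Q$ is a spectral projection of $|T|$ and $R$ is a normal $*$-anti-automorphism fixing $|T|$, we get $R(Q)=Q=Q^*$. Moreover $Q\in L^1(\mathbb{G})\cap L^\infty(\mathbb{G})$: it is a projection, and from $|T|\geq Q$ (the top eigenvalue is $1$) and Young's inequality $\|T\|_1\leq\|w\|_1^2<\infty$ one gets $\varphi(Q)\leq\varphi(|T|)=\|T\|_1<\infty$. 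Thus $Q$ is a nonzero partial isometry in $L^1\cap L^\infty$, and it suffices to prove that $Q$ is a biprojection: a biprojection $B$ satisfies $B*B=\varphi(B)B$, so it is a right shift of itself and hence an extremal bi-partial isometry by the proposition showing that a right shift of a biprojection is an extremal bi-partial isometry. (Alternatively one checks directly that $\mathcal{F}(Q)$ is extremal and applies Proposition \ref{exbi}.)

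The heart of the argument is therefore to prove $Q*Q=\varphi(Q)Q$, equivalently that $\mathcal{F}(Q)=\mathcal{F}(Q)^2/\varphi(Q)$ is a multiple of a projection. For this I would mirror the equality analysis of Theorem \ref{sq}. Normalised, $T$ is a contraction whose Fourier transform $\mathcal{F}(T)=vv^*$ is positive, i.e.\ a positive-definite element of $\mathbb{G}$ whose norm $1$ equals its total mass $\|p\|_1$, and $Q$ is the peripheral eigenprojection on which $T$ acts isometrically. Writing the polar decomposition $T=u|T|$, the relation $R(T^*)=T$ forces $R(u)=u^*$ on $\overline{\mathcal{R}(|T|)}$, so $uQ$ is again a partial isometry with $(uQ)^*(uQ)=Q$. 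The positivity of $vv^*$ is what makes this peripheral part closed under convolution; combining the range inequality $\mathcal{R}(x*y)\leq\mathcal{R}(\mathcal{R}(x)*\mathcal{R}(y))$ of Lemma \ref{ran} with the trace computation used in Theorem \ref{sq} then pins down $\mathcal{F}(Q)^2=\varphi(Q)\mathcal{F}(Q)$, so that the Donoho--Stark inequality of Proposition \ref{DS} becomes the equality $\varphi(Q)\hat{\varphi}(\mathcal{R}(\mathcal{F}(Q)))=1$. Hence $Q$ is a biprojection and the proof is complete.

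The main obstacle is exactly this peripheral/maximal-set step, which is where the finite-dimensional Corollary 6.12 of \cite{JLW} breaks down: in the measurable setting ``$|T|$ attains its norm'' need not produce any nonzero spectral projection, and even when it does one must justify that the spectral data of $|T|$ are $\varphi$-measurable and that the relevant $L^p$ pairings converge. This is precisely what the two extra hypotheses supply — that $\|w\|_2^2$ is an honest point of the spectrum (so $Q\neq 0$ and $\varphi(Q)<\infty$) and that one works with $|T|=(T^*T)^{1/2}$ rather than with $T$, which need not be self-adjoint. I would discharge the measurability issues using Lemma 18 of \cite{TerpNC} and the inclusion $L^1(\mathbb{G})\cap L^\infty(\mathbb{G})\subset\mathfrak{N}_\varphi$ already exploited in Lemma \ref{ran}, and use the positive-definiteness $\mathcal{F}(T)=vv^*\geq 0$ to guarantee that the peripheral eigenprojection $Q$ is genuinely group-like.
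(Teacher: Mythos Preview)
Your proposal has a genuine gap at exactly the point you label ``the heart of the argument.'' You want to prove $\mathcal{F}(Q)^2=\varphi(Q)\mathcal{F}(Q)$, but what you write there is not an argument: you invoke the range inequality of Lemma~\ref{ran} and an unspecified ``trace computation used in Theorem~\ref{sq},'' yet Theorem~\ref{sq} applies only to elements already known to be extremal bi-partial isometries, which is what you are trying to prove for $Q$. Nothing in your outline explains how to compute or even bound $\mathcal{F}(Q)$ from the bare definition of $Q$ as a spectral projection of $|T|$; the range lemma tells you nothing about spectral projections. The preliminary observations you make (that $\mathcal{F}(T)=vv^*\ge 0$, that $R(T^*)=T$, that $\varphi(Q)\le\|T\|_1<\infty$) are correct and useful, but they do not by themselves yield any identity for $\mathcal{F}(Q)$.

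The paper's proof takes the route you mention only parenthetically --- show $\mathcal{F}(Q)$ is extremal and apply Proposition~\ref{exbi} --- and the mechanism you are missing is a \emph{power-iteration limit}. Normalising $\|w\|_2=1$, one has $T^*T\le 1$, so $(T^*T)^k$ decreases strongly to $Q$; since $T^*T\in L^1$ this convergence is in $L^1$-norm, and by Hausdorff--Young $\mathcal{F}((T^*T)^k)\to\mathcal{F}(Q)$ in $L^\infty$. But $\mathcal{F}((T^*T)^k)$ is explicitly the $k$-fold convolution power of a fixed positive element of $\hat\varphi$-mass $1$, so each term has $L^1$-norm $1$ and positivity, and the norm limit inherits $\|\mathcal{F}(Q)\|_1=1=\|Q\|_\infty$. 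That is precisely the extremality of $\mathcal{F}(Q)$, and Proposition~\ref{exbi} finishes. Your biprojection conclusion then follows \emph{a posteriori} (via Corollary~\ref{cor2}, since the limit also shows $\mathcal{F}(Q)\ge 0$), but it is not the engine of the proof.
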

\begin{proof}
We assume that $\|w\|_2=1$.
Note that
$$\lim_{k\to\infty}((w^**R(w))(w*R(w^*)))^k=Q,$$
in the strong operator topology and $Q$ is a projection.
By the assumption that $w\in L^1(\mathbb{G})\cap L^2(\mathbb{G})$ and Young's inequality, we have that $((w^**R(w))(w*R(w^*)))^k\in L^1(\mathbb{G})$ for $k=1,2,\ldots$.
Hence $\lim_{k\to\infty}\|((w^**R(w))(w*R(w^*)))^k-Q\|_1=0$.
By the Hausdorff-Young inequality \cite{Cooney}, we obtain that
$$\lim_{k\to\infty}\|\mathcal{F}(((w^**R(w))(w*R(w^*)))^k)-\mathcal{F}(Q)\|_\infty=0,$$
i.e.
$$\mathcal{F}(Q)=\lim_{k\to\infty}((\mathcal{F}(w^*)\mathcal{F}(w^*)^*)*(\mathcal{F}(w)\mathcal{F}(w)^*))^{*(k)}>0$$
in the norm topology.

Note that
$\|((\mathcal{F}(w^*)\mathcal{F}(w^*)^*)*(\mathcal{F}(w)\mathcal{F}(w)^*))^{*(k)}\|_1=\|w\|_2^{4k}=1$.
We then see that $\|\mathcal{F}(Q)\|_1=1=\|Q\|_\infty.$
By Proposition \ref{exbi}, we see that $Q$ is an extremal bi-partial isometry.
\end{proof}

\begin{theorem}\label{bich}
Let $\mathbb{G}$ be a unimodular Kac algebra and $w\in L^1(\mathbb{G})\cap L^\infty(\mathbb{G})$.
Then $w$ is an extremal bi-partial isometry if and only if $w$ is a bi-shift of a biprojection.
Furthermore, if $w$ is a projection, then it is a left (or right) shift of a biprojection.
\end{theorem}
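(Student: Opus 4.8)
The plan is to treat the two implications separately and then specialise to projections, relying throughout on the machinery already assembled. For the implication that a bi-shift is an extremal bi-partial isometry there is almost nothing to do: since $w\in L^1(\mathbb{G})\cap L^\infty(\mathbb{G})\subseteq L^2(\mathbb{G})$ in the unimodular setting, the proposition showing that a bi-shift of a biprojection is a minimizer of the uncertainty principles applies verbatim, and Proposition \ref{min1} then identifies minimizers with extremal bi-partial isometries. So this direction is a bookkeeping combination of results in hand.

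The content is the converse. Normalising $\|w\|_2=1$, the first task is to manufacture a biprojection out of $w$. By Theorem \ref{sq}, $\frac{1}{\|w\|_2^2}w*R(w^*)$ is a partial isometry, so $\|w*R(w^*)\|_\infty=\|w\|_2^2$ and this value is an eigenvalue of $|w*R(w^*)|$; Corollary \ref{cor1} then returns a projection $Q$ that is again an extremal bi-partial isometry. The extra observation I would add is that the formula for $\mathcal{F}(Q)$ in the proof of Corollary \ref{cor1} presents it as a norm limit of convolution powers of positive elements, so $\mathcal{F}(Q)\geq 0$; a positive multiple of a partial isometry is a multiple of a projection, whence $\mathcal{F}(Q)$ is a multiple of a projection and $Q$ is a biprojection $B$. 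Running the same construction on the dual side starting from $\mathcal{F}(w)$ produces a second biprojection, which one checks is the range projection $\widetilde{B}$ of $\mathcal{F}(B)$.

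With $B$ and $\widetilde{B}$ in hand, I would next pin down the shift relations and then reconstruct $w$. Using Lemma \ref{ran}, the convolution-idempotent identity $B*B=\varphi(B)B$, and the saturated identity $\mathcal{S}(w)\mathcal{S}(\mathcal{F}(w))=1$ of Proposition \ref{min1}, I expect to force $\mathcal{R}(w^*)$ to be a right shift $B_g$ of $B$ and $\mathcal{R}(\mathcal{F}(w))$ to be a right shift $\widetilde{B}_h$ of $\widetilde{B}$. The final and hardest step is the reconstruction: one must produce $y\in L^\infty(\mathbb{G})$ with $w=\widehat{\mathcal{F}}(\widetilde{B}_h)*(B_gy)$ exactly, not merely up to matching support and co-support. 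Following the parallel argument in \cite{JLW}, I would peel off the dual shift by writing $\mathcal{F}(w)=\widetilde{B}_h\,\mathcal{F}(B_gy)$, recover $B_gy$ by applying $\widehat{\mathcal{F}}$, and absorb the polar part of $w$ into $y$. I expect this to be the main obstacle, because it is precisely here that the equality case of Lemma \ref{ran} (equivalently the saturated inequality in Theorem \ref{sq}) is needed: only the full extremality of $w$, rather than range information alone, guarantees that the bi-shift assembled from the support data reproduces $w$ on the nose.

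Finally, for the statement about projections, I would specialise the reconstruction. If $w$ is itself a projection then $\mathcal{R}(w)=\mathcal{R}(w^*)=w$, the element $y$ and one of the two shifts collapse, and one is left with a single identity of the form $w*B=\varphi(B)w$ or $B*w=\varphi(B)w$ together with $\varphi(w)=\varphi(B)$. By definition this exhibits $w$ as a left (respectively right) shift of the biprojection $B$.
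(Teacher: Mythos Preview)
Your outline is on a valid track, but it takes a noticeably more circuitous route than the paper and, more importantly, misplaces where the work actually is.

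The paper never invokes Corollary \ref{cor1} or any spectral-limit argument. It writes down the biprojection \emph{explicitly}: assuming $w$ itself is a partial isometry, set
\[
B=\frac{1}{\|w\|_2^4}\,(w*R(w)^*)(w^**R(w)).
\]
Theorem \ref{sq} says $\frac{1}{\|w\|_2^2}w*R(w)^*$ is a partial isometry, so $B$ is a projection; and applying Theorem \ref{sq} on the dual side to $\mathcal{F}(w)$ shows $\mathcal{F}(ww^*)=\mathcal{F}(w)*\hat R(\mathcal{F}(w)^*)$ is a multiple of a partial isometry, whence $\mathcal{F}(B)=\frac{1}{\|w\|_2^2}\mathcal{F}(ww^*)\mathcal{F}(ww^*)^*$ is a multiple of a projection. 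Your detour through Corollary \ref{cor1} plus the positivity of the limit does produce a biprojection, but it is extra machinery; the explicit formula does the job in two lines.

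The same directness applies to the shifts. The paper simply puts $B_g=ww^*$ and $\widetilde{B}_h=\frac{1}{\|w\|_2^4}\mathcal{F}(w)\mathcal{F}(w)^*$ and verifies the shift identity $B*B_g=\varphi(B)B_g$ by using once more that $\frac{1}{\|w\|_2^2}\mathcal{F}(B_g)$ is a partial isometry, hence satisfies $vv^*v=v$. No appeal to Lemma \ref{ran} or to saturated Donoho--Stark is needed here.

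Finally, the step you flag as ``the main obstacle'' is in fact a one-liner in the paper. Since $\frac{1}{\|w\|_2^2}\mathcal{F}(w)$ is a partial isometry,
\[
\mathcal{F}(w)=\frac{1}{\|w\|_2^4}\mathcal{F}(w)\mathcal{F}(w)^*\mathcal{F}(w)
=\frac{1}{\|w\|_2^4}\mathcal{F}\bigl(w*R(w)^**w\bigr),
\]
so $w=\widehat{\mathcal{F}}(\widetilde{B}_h)*w=\widehat{\mathcal{F}}(\widetilde{B}_h)*(B_gw)$; one just takes $y=w$. There is no need to ``peel off'' anything or to invoke an equality case of Lemma \ref{ran}. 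Your plan would eventually arrive at the same place, but you are anticipating difficulty that is not there, while the genuine content---two applications of Theorem \ref{sq}, one on each side of the Fourier transform---is somewhat hidden behind your spectral-projection step.
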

\begin{proof}
Suppose $w$ is an extremal bi-partial isometry and $w$ is a partial isometry. Let
$$B=\frac{1}{\|w\|_2^4}(w*R(w)^*)(w^**R(w)).$$
By Theorem \ref{sq}, we have that $\frac{1}{\|w\|_2^2}w*R(w)^*$ is a partial isometry and hence $B$ is a projection.

Now we compute the Fourier transform of $B$.
\begin{eqnarray*}
\mathcal{F}(B)&=&\frac{1}{\|w\|_2^4}\mathcal{F}((w*R(w)^*)(w^**R(w)))\\
&=&\frac{1}{\|w\|_2^2}\mathcal{F}((ww^*)*(R(w)^*R(w)))\\
&=&\frac{1}{\|w\|_2^2}\mathcal{F}(ww^*)\mathcal{F}(R(w)^*R(w))\\
&=&\frac{1}{\|w\|_2^2}\mathcal{F}(ww^*)\mathcal{F}(ww^*)^*
\end{eqnarray*}
Hence it is suffices to check $\mathcal{F}(ww^*)$ is a multiple of partial isometry.
First we observe that $\mathcal{F}(w)$ is an extremal bi-partial isometry.
By Theorem \ref{sq}, we have that $\mathcal{F}(w)*\hat{R}(\mathcal{F}(w)^*)$ is a multiple of partial isometry and
$$
\mathcal{F}(w)*\hat{R}(\mathcal{F}(w)^*)=\mathcal{F}(w)*\mathcal{F}(w^*)=\mathcal{F}(ww^*).
$$
Therefore $\mathcal{F}(B)$ is a multiple of a projection and $B$ is a biprojection.

Now we define $B_g=ww^*$, then $B_g$ is a projection. We are going to show that $B_g$ is a  right shift of the biprojection $B$. By proposition \ref{sq}, we have that $\frac{1}{\|w\|_2^2}B_g*R(B_g)=B$. Computing the trace on both sides, we have $\frac{1}{\|w\|_2^2}\varphi(B_g)^2=\varphi(B)$. Note that $\varphi(B_g)=\|w\|_2^2$, we see
$$\varphi(B)=\frac{1}{\|w\|_2^2}(\|w\|_2^2)^2=\|w\|_2^2=\varphi(B_g).$$

Recall that $\mathcal{F}(w)$ is an extremal bi-partial isometry. We have $\|\mathcal{F}(w)\|_\infty=\|w\|_1$, and $\frac{1}{\|w\|_2^2}\mathcal{F}(w)$ is a partial isometry.
By Theorem \ref{sq}, we see that
$$\frac{1}{\|\frac{1}{\|w\|_2^2}\mathcal{F}(w)\|_2^2}\frac{\mathcal{F}(w)}{\|w\|_2^2}*\frac{\hat{R}(\mathcal{F}(w)^*)}{\|w\|_2^2}=\frac{1}{\|w\|_2^2}\mathcal{F}(ww^*)=\frac{1}{\|w\|_2^2}\mathcal{F}(B_g)$$
is a partial isometry.

Hence we obtain that
\begin{eqnarray*}
\mathcal{F}(B_g)&=&\frac{1}{\|w\|_2^4}\mathcal{F}(B_g)\mathcal{F}(B_g)^*\mathcal{F}(B_g)\\
&=&\frac{1}{\|w\|_2^4}\mathcal{F}(B_g)\mathcal{F}(R(B_g))\mathcal{F}(B_g)\\
&=&\frac{1}{\|w\|_2^4}\mathcal{F}(B_g*R(B_g)*B_g)
\end{eqnarray*}
and $\frac{1}{\|w\|_2^4}B_g*R(B_g)*B_g=B_g$. Then
$$B*B_g=\frac{1}{\|w\|_2^2}B_g*R(B_g)*B_g=\|w\|_2^2B_g=\varphi(B_g)B_g.$$
Therefore $B_g$ is a right shift of the biprojection $B$.

Let $\widetilde{B}_h=\frac{1}{\|w\|_2^4}\mathcal{F}(w)\mathcal{F}(w)^*$. We have $\widehat{\mathcal{F}}(\widetilde{B}_h)=\frac{1}{\|w\|_2^4}w*R(w)^*$. Finally we will find a form of $w$ in terms of $B_g$ and $\widetilde{B}_h$.
\begin{eqnarray*}
\mathcal{F}(w)&=&\frac{1}{\|w\|_2^4}\mathcal{F}(w)\mathcal{F}(w)^*\mathcal{F}(w)\\
&=&\frac{1}{\|w\|_2^4}\mathcal{F}(w)\mathcal{F}(R(w)^*)\mathcal{F}(w)\\
&=&\frac{1}{\|w\|_2^4}\mathcal{F}(w*R(w)^**w).
\end{eqnarray*}
Then $w=\frac{1}{\|w\|_2^4}w*R(w)^**w=\widehat{\mathcal{F}}(\widetilde{B}_h)*(B_gw).$
\end{proof}

\begin{corollary}\label{cor2}
Let $\mathbb{G}$ be a unimodular Kac algebra.
If $x\in L^1(\mathbb{G})\cap L^2(\mathbb{G})$ and $\mathcal{F}(x)$ are positive and $\mathcal{S}(x)\mathcal{S}(\mathcal{F}(x))=1$, then $x$ is a biprojection.
\end{corollary}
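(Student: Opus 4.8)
The plan is to feed the hypothesis straight into the machinery of Proposition \ref{min1} and then to cash in the two positivity assumptions. Since $x\in L^1(\mathbb{G})\cap L^2(\mathbb{G})$ is nonzero and $\mathcal{S}(x)\mathcal{S}(\mathcal{F}(x))=1$ is exactly condition (2) of that proposition, the equivalence (2)$\Leftrightarrow$(3) immediately gives that $x$ is an extremal bi-partial isometry. The only structural consequence I will actually use is that both $x$ and $\mathcal{F}(x)$ are scalar multiples of partial isometries; everything else will come from positivity. As a byproduct, because the product $\mathcal{S}(x)\mathcal{S}(\mathcal{F}(x))=1$ is finite and each factor is positive, both $\mathcal{S}(x)=\varphi(\mathcal{R}(x))$ and $\mathcal{S}(\mathcal{F}(x))=\hat{\varphi}(\mathcal{R}(\mathcal{F}(x)))$ lie in $(0,\infty)$.

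Next I would upgrade ``multiple of a partial isometry'' to ``multiple of a projection.'' If $w$ is a partial isometry and $w\geq 0$, then $w=ww^*w=w^3$, so by the spectral calculus the spectrum of $w$ is contained in $\{0,1\}$ and $w$ is a projection. Applying this to $x\geq 0$: writing $x=\|x\|_\infty\,(x/\|x\|_\infty)$ exhibits $x/\|x\|_\infty$ as a positive, norm-one multiple of a partial isometry, hence a projection $p$, so $x=\mu p$ with $\mu=\|x\|_\infty>0$. The same argument applied to $\mathcal{F}(x)\geq 0$ yields $\mathcal{F}(x)=\nu q$ with $q$ a projection and $\nu>0$. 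Here $p=\mathcal{R}(x)$ and $q=\mathcal{R}(\mathcal{F}(x))$ are the support projections, so $\varphi(p)<\infty$ and $\hat{\varphi}(q)<\infty$; since a projection of finite trace is bounded and integrable, $p\in L^1(\mathbb{G})\cap L^\infty(\mathbb{G})$.

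The conclusion is then just the definition of a biprojection. From $x=\mu p$ and $\mathcal{F}(x)=\nu q$ we read off $\mathcal{F}(p)=\tfrac{1}{\mu}\mathcal{F}(x)=\tfrac{\nu}{\mu}\,q$, a scalar multiple of the projection $q$ in $L^\infty(\hat{\mathbb{G}})$. By definition $p$ is therefore a biprojection, and $x=\mu p$ is a positive scalar multiple of it; thus $x$ is a biprojection (up to the normalizing constant $\mu=\|x\|_\infty$, with $p=x/\|x\|_\infty$ the underlying biprojection).

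The step deserving the most care is the scalar bookkeeping in this last normalization. The uncertainty data are insensitive to rescaling $x$: the numbers $\mathcal{S}(x)$ and $\mathcal{S}(\mathcal{F}(x))$ depend only on the support projections, and the extremality and Plancherel relations $\nu=\mu\varphi(p)$, $\mu=\nu\hat{\varphi}(q)$, $\mu^2\varphi(p)=\nu^2\hat{\varphi}(q)$ only reproduce the minimizer identity $\varphi(p)\hat{\varphi}(q)=1$ and do not by themselves pin down $\mu$. So the substantive content of the corollary is that positivity of $x$ \emph{and} of $\mathcal{F}(x)$ collapses the general bi-shift structure of a minimizer down to a single projection that is automatically a biprojection, and the identification ``$x$ is a biprojection'' is understood after this normalization. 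A secondary, purely technical point is to run the $w=w^3$ spectral argument inside $\widetilde{\mathcal{M}}$; this is harmless since $x$ and $\mathcal{F}(x)$ turn out to be bounded once recognized as multiples of projections.
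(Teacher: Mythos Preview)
Your argument is correct and is precisely the natural deduction the paper leaves implicit (Corollary~\ref{cor2} is stated without proof, immediately after Theorem~\ref{bich}): invoke Proposition~\ref{min1} to get that $x$ and $\mathcal{F}(x)$ are scalar multiples of partial isometries, use positivity to upgrade each to a multiple of a projection, and read off the biprojection condition from the definition. Your remark that the conclusion should be read ``up to the positive scalar $\mu=\|x\|_\infty$'' is well taken, since the hypotheses are scale-invariant.
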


\begin{lemma}\label{scale}
Let $\mathbb{G}$ be a unimodular Kac algebra. Suppose $B$ is a biprojection in $L^1(\mathbb{G})\cap L^\infty(\mathbb{G})$ and $\widetilde{B}$ is the range projection of $\mathcal{F}(B)$ in $L^1(\widehat{\mathbb{G}})\cap L^\infty(\widehat{\mathbb{G}})$. If $x\in L^1(\mathbb{G})\cap L^\infty(\mathbb{G})$ such that $\mathcal{R}(x)=B$ and $\mathcal{R}(\mathcal{F}(x))=\widetilde{B}$, then $x$ is a multiple of $B$.
\end{lemma}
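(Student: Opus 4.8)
The plan is to reduce $x$ to an extremal bi-partial isometry, then exploit the fact that its autocorrelation $x*R(x)^*$ is forced to be a scalar multiple of $B$, and finally convert this into the conclusion through a single Cauchy--Schwarz equality. The reason I avoid the bi-shift decomposition of Theorem \ref{bich} is that it is circular here: feeding the hypotheses into it only re-derives the invariance $B*x=\varphi(B)x$ and never produces an honest scalar, so the real work must be done elsewhere.

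First I would record the reduction. Since $B$ is a biprojection, $\mathcal{F}(B)$ is a multiple of the projection $\widetilde{B}$, and (as already used in the proposition following the definition of shifts) $\varphi(B)\hat{\varphi}(\widetilde{B})=1$. Because $\mathcal{R}(x)=B$ and $\mathcal{R}(\mathcal{F}(x))=\widetilde{B}$, this gives $\mathcal{S}(x)\mathcal{S}(\mathcal{F}(x))=\varphi(B)\hat{\varphi}(\widetilde{B})=1$, so Proposition \ref{min1} makes $x$ an extremal bi-partial isometry. I then write $x=\mu v$ and $\mathcal{F}(x)=\nu u$ with $v,u$ partial isometries, so that $vv^*=\mathcal{R}(x)=B$ and $uu^*=\mathcal{R}(\mathcal{F}(x))=\widetilde{B}$; in particular $\mathcal{F}(x)\mathcal{F}(x)^*=|\nu|^2\widetilde{B}$ is a scalar multiple of $\widetilde{B}$. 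This last point is exactly where the partial-isometry structure is essential: without it I would only know $\mathcal{R}(\mathcal{F}(x)\mathcal{F}(x)^*)\le\widetilde{B}$, not proportionality.

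Next I would pin down $\mathcal{F}(B)=\varphi(B)\widetilde{B}$ (Plancherel $\|B\|_2^2=\|\mathcal{F}(B)\|_2^2$ together with $\varphi(B)\hat{\varphi}(\widetilde{B})=1$ fixes the scalar), hence $\widehat{\mathcal{F}}(\widetilde{B})=\varphi(B)^{-1}B$. As in the proof of Theorem \ref{bich} one has $\mathcal{F}(x*R(x)^*)=\mathcal{F}(x)\mathcal{F}(R(x)^*)=\mathcal{F}(x)\mathcal{F}(x)^*=|\nu|^2\widetilde{B}$, and applying $\widehat{\mathcal{F}}$ yields
$$x*R(x)^*=|\nu|^2\,\widehat{\mathcal{F}}(\widetilde{B})=\frac{|\nu|^2}{\varphi(B)}\,B.$$
Thus the autocorrelation of $x$ collapses onto $\mathbb{C}B$. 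Taking the trace and using $\varphi(a*b)=\varphi(a)\varphi(b)$ and $\varphi\circ R=\varphi$ gives $|\varphi(x)|^2=\varphi(x*R(x)^*)=|\nu|^2$, while Plancherel gives $\|x\|_2^2=\|\mathcal{F}(x)\|_2^2=|\nu|^2\hat{\varphi}(\widetilde{B})=|\nu|^2/\varphi(B)$, so that $|\nu|^2=\|x\|_2^2\varphi(B)$. Combining the two computations, $|\varphi(x)|^2=\|x\|_2^2\varphi(B)=\|x\|_2^2\|B\|_2^2$. Since $\mathcal{R}(x)=B$ forces $Bx=x$ and hence $\varphi(x)=\varphi(Bx)=\langle x,B\rangle_\varphi$, this is precisely equality in $|\langle x,B\rangle_\varphi|\le\|x\|_2\|B\|_2$, which forces $x$ to be a scalar multiple of $B$; the multiple is nonzero because $x\neq 0$.

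The step I expect to be the genuine obstacle is the rigidity at the end: the hypotheses only constrain two range projections, and turning that into an exact scalar identity is not formal. The idea that breaks the circularity is to route everything through $x*R(x)^*$, whose Fourier transform is visibly a multiple of $\widetilde{B}$; the delicate point is then checking that the two independent scalar computations---the trace of the autocorrelation and the Plancherel norm---agree exactly, so that the Cauchy--Schwarz inequality is saturated. Everything else is assembly of Proposition \ref{min1}, the identity $\mathcal{F}(R(x)^*)=\mathcal{F}(x)^*$ from Theorem \ref{bich}, Plancherel, and $\varphi(a*b)=\varphi(a)\varphi(b)$.
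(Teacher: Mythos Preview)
Your argument is correct, but it is considerably more elaborate than the paper's. The paper does not invoke Proposition \ref{min1}, the autocorrelation $x*R(x)^*$, or Cauchy--Schwarz at all. Instead it uses directly that a biprojection is a group-like projection, namely $\Delta(B)(B\otimes 1)=\Delta(B)(1\otimes B)=B\otimes B$ (hence also $(B\otimes 1)\Delta(B)=(1\otimes B)\Delta(B)=B\otimes B$). From $\mathcal{R}(x)=B$ one has $Bx=x$, and from $\mathcal{R}(\mathcal{F}(x))=\widetilde{B}$ one has $B*x=\varphi(B)x$. Then a single computation using the group-like identity and the unimodular invariance $(\varphi\otimes\iota)\Delta=\varphi(\cdot)1$ gives
\[
\varphi(B)\,x=B*(Bx)=(\varphi\otimes\iota)\bigl((B\otimes 1)\Delta(B)\Delta(x)\bigr)=(\varphi\otimes\iota)\bigl((1\otimes B)\Delta(Bx)\bigr)=\varphi(Bx)\,B,
\]
so $x=\dfrac{\varphi(Bx)}{\varphi(B)}B$. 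This is both shorter and entirely self-contained: it needs neither the minimizer characterization nor Plancherel/Hausdorff--Young. Your route has the merit of showing how the lemma sits inside the uncertainty-principle framework (everything factors through the extremal bi-partial isometry structure), but the paper's route isolates the purely algebraic reason the statement is true, namely the group-like property of $B$.
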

\begin{proof}
By the assumption, we have $Bx=x$ and $\mathcal{F}(B)\mathcal{F}(x)=\varphi(B)\mathcal{F}(x)$, i.e. $B*x=\varphi(B)x$. Hence $B*Bx=\varphi(B)x$. Note that $B$ is biprojection, then $B$ is a group-like projection \cite{LWW} i.e.
$$\Delta(B)(B\otimes 1)=\Delta(B)(1\otimes B)=B\otimes B.$$
Now we have
\begin{eqnarray*}
\varphi(B)x&=&B*(Bx)=(\varphi\otimes\iota)((B\otimes 1)\Delta(Bx))\\
&=&(\varphi\otimes \iota)((1\otimes B)\Delta(B)\Delta(x))\\
&=&\varphi(Bx)B,
\end{eqnarray*}
i.e. $x$ is a multiple of $B$.
\end{proof}

\begin{theorem}\label{hardy}[Hardy's uncertainty principle]
Suppose $\mathbb{G}$ is a unimodular Kac algebra and $w\in \mathbb{G}$ is a bi-shift of biprojection. For any $x\in L^1(\mathbb{G})\cap L^\infty(\mathbb{G})$, if $|x|\leq C|w|$ and $|\mathcal{F}(x)|\leq C'|\mathcal{F}(w)|,$ for some constants $C>0$ and $C'>0$,
then $x$ is a scalar multiple of $w$.
\end{theorem}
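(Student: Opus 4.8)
The plan is to show first that the hypotheses force $x$ to be itself a minimizer with exactly the same supports as $w$ in both pictures, and then to exploit the rigidity of biprojections (Lemma \ref{scale}) to upgrade ``same supports'' to ``proportional.''

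\emph{Squeezing $x$ into a minimizer.} Since $0\le |x|\le C|w|$, any vector annihilated by $|w|$ is annihilated by $|x|$, so $\mathcal R(x^*)\le\mathcal R(w^*)$; as $\varphi$ is a trace and the range and source projections of an element are Murray--von Neumann equivalent, this gives $\mathcal S(x)=\varphi(\mathcal R(x^*))\le\varphi(\mathcal R(w^*))=\mathcal S(w)$. The same argument applied to $|\mathcal F(x)|\le C'|\mathcal F(w)|$ yields $\mathcal S(\mathcal F(x))\le\mathcal S(\mathcal F(w))$ together with $\mathcal R(\mathcal F(x)^*)\le\mathcal R(\mathcal F(w)^*)$. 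Because $w$ is a bi-shift of a biprojection it is an extremal bi-partial isometry (Theorem \ref{bich}), so $\mathcal S(w)\mathcal S(\mathcal F(w))=1$ by Proposition \ref{min1}. Combining with Proposition \ref{DS},
\[1\le \mathcal S(x)\mathcal S(\mathcal F(x))\le \mathcal S(w)\mathcal S(\mathcal F(w))=1,\]
so all inequalities are equalities. In particular $\mathcal S(x)=\mathcal S(w)<\infty$ and $\mathcal S(\mathcal F(x))=\mathcal S(\mathcal F(w))<\infty$, and since a subprojection with the same finite trace as its majorant coincides with it, $\mathcal R(x^*)=\mathcal R(w^*)=:p$ and $\mathcal R(\mathcal F(x)^*)=\mathcal R(\mathcal F(w)^*)=:\widetilde p$. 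Moreover $\mathcal S(x)\mathcal S(\mathcal F(x))=1$ places $x$ among the minimizers, so by Proposition \ref{min1} and Theorem \ref{bich} $x$ is an extremal bi-partial isometry and itself a bi-shift of a biprojection. Being a scalar multiple of a partial isometry, $|x|=\|x\|_\infty\,p$ and $|\mathcal F(x)|=\|\mathcal F(x)\|_\infty\,\widetilde p$, so the magnitude hypotheses have been distilled into equalities of support projections in both pictures.

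\emph{Reducing to a common biprojection.} Let $B=\frac{1}{\|w\|_2^4}(w*R(w)^*)(w^**R(w))$ be the biprojection attached to $w$ by Theorem \ref{bich}, with right shift $B_g=ww^*$ and $\widetilde B_h=\frac{1}{\|w\|_2^4}\mathcal F(w)\mathcal F(w)^*$. The next task is to show that $x$ is a bi-shift of the \emph{same} biprojection $B$ with the \emph{same} shift data. Here I would run the computation of Theorem \ref{sq} for $x$, using the Fourier identity $\mathcal F(R(a)^*)=\mathcal F(a)^*$ (a direct consequence of Proposition 2.4 in \cite{KuVaes03}, already invoked in the excerpt). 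Since $\mathcal F(x)$ and $\mathcal F(w)$ are scalar multiples of partial isometries sharing the source $\widetilde p$, one gets $\mathcal F(R(x)^**x)=\mathcal F(x)^*\mathcal F(x)=\|\mathcal F(x)\|_\infty^2\,\widetilde p$, so the auto-correlations of $x$ and $w$ agree up to the scalar $\|\mathcal F(x)\|_\infty^2/\|\mathcal F(w)\|_\infty^2$; this should pin the biprojection and shifts of $x$ to coincide with those of $w$.

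\emph{Collapsing via Lemma \ref{scale}.} Put $z=\frac{1}{\|w\|_2^2}R(w)^**x$, so that $\mathcal F(z)=\frac{1}{\|w\|_2^2}\mathcal F(w)^*\mathcal F(x)$. By Lemma \ref{ran} and the previous step, $\mathcal R(z)\le\mathcal R(R(w)^**x)\le B$, while $\mathcal F(z)$ is supported under $\widetilde B=\mathcal R(\mathcal F(B))$; a Donoho--Stark squeeze identical to the first paragraph forces $\mathcal R(z)=B$ and $\mathcal R(\mathcal F(z))=\widetilde B$, whence Lemma \ref{scale} gives $z=\mu B$ for a scalar $\mu$. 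I would then feed this back into the normal form $w=\widehat{\mathcal F}(\widetilde B_h)*(B_g w)$, convolving the relation $R(w)^**x=\mu\|w\|_2^2 B$ on the left by $w$ and using the group-like identity $\Delta(B)(B\otimes 1)=B\otimes B$ exactly as in the proof of Lemma \ref{scale}; this collapses the biprojection and returns $x=\lambda w$.

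The first paragraph is routine. The genuine obstacle is the matching step: the hypotheses control only the \emph{source} projections of $x$ in the two pictures, whereas the bi-shift normal form and the convolutions above are governed by \emph{range} projections, so the heart of the argument is to show that matching sources in both pictures already forces $x$ and $w$ to be bi-shifts of the same biprojection with the same shifts. Equally delicate is the final deconvolution, for convolution by a biprojection is far from invertible; it is only the combination of the matched supports with the group-like relation that removes the ambiguity. This is also why one must work with the cross term $R(w)^**x$ rather than the auto-correlation $R(x)^**x$: a single auto-correlation identity cannot determine $x$ up to a scalar, reflecting the familiar phase-retrieval ambiguity.
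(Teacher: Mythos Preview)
Your opening paragraph --- squeezing $x$ into a minimizer via Donoho--Stark and concluding that $x$ and $w$ share source projections in both pictures --- is correct and is exactly how the paper begins. The gap is in what follows.

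You set $z=\frac{1}{\|w\|_2^2}R(w)^**x$ and assert that $\mathcal R(z)\le B$ and that $\mathcal F(z)$ is supported under $\widetilde B$. Neither is justified. On the Fourier side, $\mathcal F(z)=\frac{1}{\|w\|_2^2}\mathcal F(w)^*\mathcal F(x)$, so $\mathcal R(\mathcal F(z))\le\mathcal R(\mathcal F(w)^*)$; but $\mathcal R(\mathcal F(w)^*)$ is the \emph{source} of $\mathcal F(w)$, which by Theorem~\ref{bich} is a shift of $\widetilde B$, not $\widetilde B$ itself. The spatial claim $\mathcal R(R(w)^**x)\le B$ has the same defect: Lemma~\ref{ran} feeds on range projections $\mathcal R(R(w)^*)$ and $\mathcal R(x)$, neither of which you control, and the output is a convolution of shifts, not $B$. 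You diagnose the issue yourself in the last paragraph, but the sketch does not resolve it, and the final ``deconvolution'' step (convolving $R(w)^**x=\mu\|w\|_2^2 B$ on the left by $w$) would need $w*B$ to recover $w$, which again requires a shift identity you have not established for $w$ on that side.

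The paper closes this gap by iterating the squeeze twice rather than once. First it passes to the \emph{product} $xw^*$: since $\mathcal R(xw^*)\le\mathcal R(x)$ and $\mathcal R(wx^*)\le\mathcal R(w)$, and since $\mathcal F(xw^*)=\mathcal F(x)*\hat R(\mathcal F(w)^*)$ has Fourier support dominated (via Lemma~\ref{ran} and Theorem~\ref{sq}) by the projection $\widetilde B_h*\hat R(\widetilde B_h)$, a Donoho--Stark squeeze forces $xw^*$ and $ww^*$ to have identical supports in both pictures. Then it convolves once more: $(xw^*)*R(ww^*)^*$ and $(ww^*)*R(ww^*)^*$ are again squeezed to share supports, and the latter is (by Theorem~\ref{sq}) a multiple of a biprojection $Q$. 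Now Lemma~\ref{scale} applies cleanly to give $(xw^*)*R(ww^*)^*\in\mathbb C\,Q$, and both $x$ and $w$ are recovered as multiples of $(Q*(ww^*))w$ via the convolution identities already proved in Theorem~\ref{bich}. The point is that forming the product $xw^*$ first converts the awkward source constraints into range constraints that Lemma~\ref{ran} and Theorem~\ref{sq} can actually use; your single cross-correlation $R(w)^**x$ skips this and lands one shift away from where Lemma~\ref{scale} needs you to be.
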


\begin{proof}
Suppose $w\in \mathbb{G}$ is a bi-shift of a biprojection $B$. Let $\widetilde{B}$ be the range projection of $\mathcal{F}(B)$, and $B_g$, $\widetilde{B}_h$ be right shifts of biprojections $B$, $\widetilde{B}$ respectively, such that $\mathcal{R}(w)\leq B_g$ and $\mathcal{R}(\mathcal{F}(w))\leq \widetilde{B}_h$.
If $x$ satisfies the assumption, then $\mathcal{R}(x)\leq B_g$ and $\mathcal{R}(\mathcal{F}(x))\leq \widetilde{B}_h$.
By Theorem \ref{Thm:main1}, we have that $\mathcal{R}(w)=\mathcal{R}(x)=B_g$ and $\mathcal{R}(\mathcal{F}(w))=\mathcal{R}(\mathcal{F}(x))=\widetilde{B}_h$.

We assume that $x\neq0$.
Then $xw^*$ and $ww^*$ are nonzero and
\begin{eqnarray*}
\mathcal{R}(\mathcal{F}(xw^*))&=&\mathcal{R}(\mathcal{F}(x)*\mathcal{F}(w^*))\\
&=&\mathcal{R}(\mathcal{F}(x)*\hat{R}(\mathcal{F}(w)^*)\\
&\leq &\mathcal{R}(\widetilde{B}_h*\hat{R}(\widetilde{B}_h)).
\end{eqnarray*}
By Theorem \ref{sq}, $\widetilde{B}_h*\hat{R}(\widetilde{B}_h)$ is a multiple of a projection and
$$\mathcal{S}(\mathcal{F}(xw^*))\leq \mathcal{S}(\widetilde{B}_h*\hat{R}(\widetilde{B}_h))=\mathcal{S}(\widetilde{B}_h)=\mathcal{S}(\mathcal{F}(w)).$$
Then
$$1\leq \mathcal{S}(xw^*)\mathcal{S}(\mathcal{F}(xw^*))=\mathcal{S}(wx^*)\mathcal{S}(\mathcal{F}(xw^*))\leq \mathcal{S}(w)\mathcal{S}(\mathcal{F}(w))=1.$$
Hence we have
$$\mathcal{S}(wx^*)=\mathcal{S}(w);\quad \mathcal{S}(\mathcal{F}(xw^*))=\mathcal{S}(\mathcal{F}(w))=\mathcal{S}(\widetilde{B}_h*\hat{R}(\widetilde{B}_h)).$$
Therefore
$$\mathcal{R}(wx^*)=\mathcal{R}(w)=\mathcal{R}(x)=\mathcal{R}(xw^*),\quad \mathcal{R}(\mathcal{F}(xw^*))=\mathcal{R}(\widetilde{B}_h*\hat{R}(\widetilde{B}_h)).$$
Hence $xw^*$ is a bi-shift of a biprojection. Similarly $ww^*$ is a bi-shift of a biprojection.
Moreover,
$$\mathcal{R}(wx^*)=\mathcal{R}(ww^*),\quad \mathcal{R}(\mathcal{F}(xw^*))=\mathcal{R}(\mathcal{F}(ww^*)).$$
By a similar argument, we have $(wx^*)*R(ww^*)^*$ and $(ww^*)*R(ww^*)^*$ are bi-shifts of biprojections and
\begin{equation}\label{ranq}
\begin{aligned}
\mathcal{R}((xw^*)*R(ww^*)^*)&=\mathcal{R}((ww^*)*R(ww^*)^*),\\
\mathcal{R}(\mathcal{F}((xw^*)*R(ww^*)^*))&=\mathcal{R}(\mathcal{F}((ww^*)*R(ww^*)^*)).
\end{aligned}
\end{equation}
By Theorem \ref{sq}, we have that $(ww^*)*R(ww^*)^*$ is a multiple of a biprojection $Q$.
By Lemma \ref{scale} and Equations (\ref{ranq}), we have that $(xw^*)*R(ww^*)^*$ is a multiple of biprojection $Q$.
Observe that both $x$ and $w$ are multiples of $(Q*(ww^*))w$.
Therefore $x$ is a scalar multiple of $w$.
\end{proof}

\begin{corollary}
Let $\mathbb{G}$ be a unimodular Kac algebra.
Suppose $B$ is a biprojection in $L^1(\mathbb{G})$ and $\widetilde{B}$ is the range projection of $\mathcal{F}(B)$ in $L^1(\widehat{\mathbb{G}})$.
Let $B_g$ and $\widetilde{B}_h$ be right shifts of biprojections $B$ and $\widetilde{B}$ respectively.
Then there is at most one element $x\in L^1(\mathbb{G})\cap L^2(\mathbb{G})$ up to a scalar such that the range projection of $x$ is contained in $B_g$ and the range projection of $\mathcal{F}(x)$ is contained in $\widetilde{B}_h$.
\end{corollary}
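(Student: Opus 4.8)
The plan is to show that any two elements satisfying the hypotheses are scalar multiples of one another; since the empty case is vacuous, this yields the ``at most one up to a scalar'' assertion. Fix such an $x\in L^1(\mathbb{G})\cap L^2(\mathbb{G})$ with $\mathcal{R}(x)\leq B_g$ and $\mathcal{R}(\mathcal{F}(x))\leq\widetilde{B}_h$. First I would run the squeezing argument already used for shifts: since $\varphi$ and $\hat{\varphi}$ are faithful traces, $\mathcal{S}(x)=\varphi(\mathcal{R}(x))\leq\varphi(B_g)=\varphi(B)$ and $\mathcal{S}(\mathcal{F}(x))\leq\hat{\varphi}(\widetilde{B}_h)=\hat{\varphi}(\widetilde{B})$, while $\varphi(B)\hat{\varphi}(\widetilde{B})=1$ because $B$ is a biprojection with dual range projection $\widetilde{B}$. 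Combining this with Proposition \ref{DS},
$$1\leq\mathcal{S}(x)\mathcal{S}(\mathcal{F}(x))\leq\varphi(B)\hat{\varphi}(\widetilde{B})=1,$$
so every inequality is an equality. In particular $\mathcal{S}(x)\mathcal{S}(\mathcal{F}(x))=1$, so $x$ is an extremal bi-partial isometry by Proposition \ref{min1}; and since $\mathcal{S}(x)=\varphi(B_g)$ and $\mathcal{S}(\mathcal{F}(x))=\hat{\varphi}(\widetilde{B}_h)$, faithfulness of the traces upgrades the range containments to the equalities $\mathcal{R}(x)=B_g$ and $\mathcal{R}(\mathcal{F}(x))=\widetilde{B}_h$.

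By Theorem \ref{bich}, each such $x$ is therefore a bi-shift of the biprojection $B$, with range data exactly $B_g$ and $\widetilde{B}_h$. Now let $x_1$ and $x_2$ be two such elements; by the previous paragraph both are bi-shifts of $B$ and
$$\mathcal{R}(x_2)=B_g=\mathcal{R}(x_1),\qquad \mathcal{R}(\mathcal{F}(x_2))=\widetilde{B}_h=\mathcal{R}(\mathcal{F}(x_1)).$$
I would then take $w=x_1$ and feed $x=x_2$ into the argument proving Theorem \ref{hardy}. The crucial observation is that the proof of Theorem \ref{hardy} uses its pointwise hypotheses $|x|\leq C|w|$ and $|\mathcal{F}(x)|\leq C'|\mathcal{F}(w)|$ only to deduce, in its opening step, the range containments $\mathcal{R}(x)\leq B_g$ and $\mathcal{R}(\mathcal{F}(x))\leq\widetilde{B}_h$; after that point it manipulates only the range projections of $x$, $w$, $xw^*$, $ww^*$ and $(xw^*)*R(ww^*)^*$. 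Since we have supplied these range identities directly, the remainder of that proof applies verbatim: using Theorem \ref{sq} and Lemma \ref{scale} together with the matching range projections, $(x_2x_1^*)*R(x_1x_1^*)^*$ and $(x_1x_1^*)*R(x_1x_1^*)^*$ are multiples of a common biprojection $Q$, and both $x_2$ and $x_1$ are multiples of $(Q*(x_1x_1^*))x_1$. Hence $x_2$ is a scalar multiple of $x_1$.

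The only real obstacle is the mismatch between the hypotheses of Theorem \ref{hardy}, which are stated as pointwise domination by a fixed bi-shift, and the range-projection hypotheses of the corollary; the whole point is that the range-projection data is exactly what the proof of Theorem \ref{hardy} actually consumes, so no domination by a fixed $|w|$ is needed once equality of range projections is in hand. I would therefore isolate the range-projection conclusion of the first paragraph as the bridge, taking care to record the \emph{equalities} $\mathcal{R}(x)=B_g$ and $\mathcal{R}(\mathcal{F}(x))=\widetilde{B}_h$ (not merely the containments) so that the matching-range input demanded by Lemma \ref{scale} is available for both $x_1$ and $x_2$.
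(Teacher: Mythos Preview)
Your proposal is correct and matches the paper's intended argument: the corollary is stated immediately after Theorem \ref{hardy} with no separate proof, precisely because the proof of Theorem \ref{hardy} uses the domination hypotheses only to obtain the range containments $\mathcal{R}(x)\leq B_g$ and $\mathcal{R}(\mathcal{F}(x))\leq\widetilde{B}_h$, which here are given directly. Your squeezing step via Proposition \ref{DS} to upgrade the containments to equalities, followed by re-running the $xw^*$--$ww^*$ portion of the proof of Theorem \ref{hardy} (through Theorem \ref{sq} and Lemma \ref{scale}), is exactly the derivation the paper has in mind.
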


\begin{remark}
  Therefore we can use the supports $B_g$ and $\widetilde{B}_h$ to define a bi-shift of a biprojection. It is independent of the choice of $y$ in Definition \ref{bi-shift}.
\end{remark}

\end{document}